\DeclareMathAlphabet\EuRoman{U}{eur}{m}{n}
\SetMathAlphabet\EuRoman{bold}{U}{eur}{b}{n}
\crefname{assumption}{Assumption}{Assumptions}
\crefname{claim}{Claim}{Claims}
\let\reftagform@=\tagform@
\def\tagform@#1{\maketag@@@{\ignorespaces\textcolor{gray}{(#1)}\unskip\@@italiccorr}}
\renewcommand{\eqref}[1]{\textup{\reftagform@{\ref{#1}}}}
\definecolor{WowColor}{rgb}{.75,0,.75}
\definecolor{SubtleColor}{rgb}{0,0,.50}
\newcounter{margincounter}
\declaretheorem[style=plain,numberwithin=section,name=Theorem]{theorem}
\declaretheorem[style=plain,sibling=theorem,name=Lemma]{lemma}
\declaretheorem[style=plain,sibling=theorem,name=Claim]{claim}
\declaretheorem[style=definition,sibling=theorem,name=Definition]{definition}
\declaretheorem[style=definition,sibling=theorem,name=Example]{example}
\newenvironment{condition}[1]
 {\conditionINNER}
 {\endconditionINNER}
\declaretheoremstyle[
    spaceabove=-6pt,
    spacebelow=6pt,
    headfont=\normalfont\bfseries,
    bodyfont = \normalfont,
    postheadspace=1em,
    qed=$\square$,
    headpunct={{}}]{myproofstyle}
\numberwithin{equation}{section}
\numberwithin{theorem}{section}
\def\[#1\]{\begin{align}#1\end{align}}
\def\*[#1\]{\begin{align*}#1\end{align*}}
\newcommand{\argdot}{\cdot}
\newcommand{\Rationals}{\mathbb{Q}}
\newcommand{\Reals}{\mathbb{R}}
\newcommand{\Nats}{\mathbb{N}}
\newcommand{\NNReals}{\Reals_{\ge 0}}
\newcommand{\PosReals}{\Reals_{> 0}}
\DeclareMathOperator{\conv}{conv}
\newcommand{\dee}{\mathrm{d}}
\DeclareMathOperator*{\newlim}{\mathrm{lim}\vphantom{\mathrm{infsup}}}
\DeclareMathOperator*{\newmin}{\mathrm{min}\vphantom{\mathrm{infsup}}}
\DeclareMathOperator*{\newmax}{\mathrm{max}\vphantom{\mathrm{infsup}}}
\DeclareMathOperator*{\newinf}{\mathrm{inf}\vphantom{\mathrm{infsup}}}
\DeclareMathOperator*{\newsup}{\mathrm{sup}\vphantom{\mathrm{infsup}}}
\renewcommand{\lim}{\newlim}
\renewcommand{\min}{\newmin}
\renewcommand{\max}{\newmax}
\renewcommand{\inf}{\newinf}
\renewcommand{\sup}{\newsup}
\newcommand{\defn}[1]{\emph{#1}}
\newcommand{\cF}{\mathcal F}
\newcommand{\BorelSets}[1]{\mathcal{B}(#1)}
\newcommand{\NSE}[1]{{^{*}#1}}
\newcommand{\ST}{\mathsf{st}}
\newcommand{\AS}{A}
\newcommand{\PowerSet}{\mathscr{P}}
\newcommand{\HReals}{\NSE{\Reals}}
\newcommand{\Model}{P}
\newcommand{\Loss}{\ell}
\newcommand{\NS}[1]{\mathrm{NS}(#1)}
\newcommand{\Risk}{r}
\newcommand{\cS}{\mathcal{S}}
\newcommand{\cD}{\mathcal{D}}
\newcommand{\cA}{\mathcal{A}}
\newcommand{\cC}{\mathcal{C}}
\newcommand{\IFuncs}[2]{{#1}^{#2}}
\newcommand{\FiniteSubsets}[1]{{#1}^{[< \infty]}}
\newcommand{\vzero}{\mathbf{0}}
\newtheorem{open problem}{Open Problem}
\newcommand{\Loeb}[1]{\overline{#1}}
\newcommand{\ProbMeasures}[1]{\mathcal{M}_1(#1)}
\newcommand{\gY}{Y}
\newcommand{\interior}[1]{%
  {\kern0pt#1}^{\mathrm{o}}%
}
\newcommand{\FRRE}{\mathcal D}
\newcommand{\FiniteRiskEstimators}{\FRRE_0}
\newcommand{\refproof}[1]{See \cref{#1} for \IfSubStr{#1}{,}{proofs}{a proof}. }
\newcommand{\TTheta}{T_{\Theta}}
\newcommand{\Ext}[1]{^{\sigma}#1}
\newcommand{\sdelta}{\NSE{\!\delta}}
\newcommand{\sRisk}{\NSE{\!\Risk}}
\newcommand{\IP}[2]{\langle #1, #2\rangle}
\newif\iflongform
\newcommand{\RS}[1]{\cS^{#1}}
\providecommand*{\toclevel@definition}{0}
\providecommand*{\toclevel@theorem}{0}
\providecommand*{\toclevel@lemma}{0}
\title
{
Statistical minimax theorems via nonstandard analysis %
}
\newcommand{\pd}[1]{{#1}_p}
\newcommand{\ipd}[1]{{#1}^{p}}
\newcommand{\FM}[1]{\mathcal{M}^{\mathsf{f}}_{1}(#1)}
\newcommand{\PM}[1]{\mathcal{M}_{1}(#1)}
\newcommand{\Lip}[2]{\mathcal{L}_{#1}(#2)}
\newcommand{\comp}[1]{\hat{#1}}
\newcommand{\topology}{\mathcal{T}}
\newcommand{\EReals}{\overline{\Reals}}
\DeclareMathOperator{\EST}{\mathsf{\bar{st}}}
\renewcommand{\subset}{\subseteq}
\begin{document}

\author[Duanmu]{Haosui Duanmu}
\address{Haosui Duanmu, Harbin Institute of Technology}

\author[Roy]{Daniel M.~Roy}
\address{Daniel M.~Roy, University of Toronto}

\author[Schrittesser]{David Schrittesser}
\address{David Schrittesser, Harbin Institute of Technology \emph{and} University of Toronto}

\begin{abstract}

For statistical decision problems with finite parameter space, it is well-known that the upper value (minimax value) agrees with the lower value (maximin value). Only under a generalized notion of  prior does such an equivalence carry over to the case infinite parameter spaces, provided nature can play a prior distribution and the statistician can play a randomized strategy. 
Various such extensions of this classical result have been established, 
but they are subject to technical conditions such as compactness of the parameter space or continuity of the risk functions.  
Using nonstandard analysis, we prove a minimax theorem for arbitrary statistical decision problems.
Informally, we show that for every statistical decision problem,
the standard upper value equals the lower value when the $\sup$ is taken over the collection of all internal priors, which may assign infinitesimal probability to (internal) events.
Applying our nonstandard minimax theorem, we derive several standard minimax theorems: 
a minimax theorem on compact parameter space with continuous risk functions, a finitely additive minimax theorem with bounded risk functions and a minimax theorem on totally bounded metric parameter spaces with Lipschitz risk functions.

\end{abstract}

\maketitle

\singlespace

\tableofcontents

\section{Introduction}
Consider a statistical decision problem with parameter space $\Theta$ and let $\PM{\Theta}$ denote the set of countably additive probability measures on $\Theta$. 
For every decision procedure $\delta$, every parameter $\theta\in \Theta$ and every $\pi\in \PM{\Theta}$, we write $\Risk(\theta,\delta)$ to denote the risk of $\delta$ with respect to $\theta$; we write $\Risk(\pi,\delta)$ to denote the Bayes risk of $\delta$ with respect to $\pi$.  
Given a set $\FRRE$ of decision procedures, it is natural to ask: Under which conditions 
does the following equality obtain?  %
\[\label{minimaxeq}
\inf_{\delta\in \FRRE}\sup_{\theta\in \Theta}\Risk(\theta,\delta)=\sup_{\pi\in \PM{\Theta}}\inf_{\delta\in \FRRE}\Risk(\pi,\delta)
\]
that is, when is the \emph{upper value} or \emph{minimax value} (the left-hand side) equal to the \emph{lower value} or \emph{maximin value} (the right-hand side)?
A decision procedure $\delta_0$ is \emph{minimax} if $\sup_{\theta\in \Theta}\Risk(\theta,\delta_0)=\inf_{\delta\in \FRRE}\sup_{\theta\in \Theta}\Risk(\theta,\delta)$. Thus, a minimax procedure is one which achieves the infinum on the left hand side of \cref{minimaxeq}; in absence of such a procedure, one may consider a sequence of decision procedures which achieve the minimax value in the limit. 
Dually, when \cref{minimaxeq} holds we call a prior $\pi$ \emph{least favorable prior} if a minimax decision procedure is optimal under $\pi$. 
More generally, we may consider a sequence of priors with the requisite limit behavior, called a least favorable sequence. 
In situations for which no prior information is available, the available minimax decision procedures are often suggested for cautious individuals, since they offer maximum protection in the worst-case scenario. 

In statistical decision theory, when one speaks of the (or a) mimimax theorem, one usually means a theorem stating that \cref{minimaxeq} holds in some particular context.
We shall refer to \cref{minimaxeq} as the \emph{minimax equality}.
It is well-known that \cref{minimaxeq} holds for statistical decision problems with finite parameter space, provided nature can play a prior distribution and the statistician can play a randomized strategy.
As the literature stands (\citep{Wald47,Wald49,LeCam55,oddsmaking}), for statistical decision problems with infinite parameter space, the validity of \cref{minimaxeq} is subject to technical conditions which limit its applicability. 
Given the plethora of technical conditions, it would not be unreasonable to presume that \cref{minimaxeq} may fail in a general setting. 

In the setting of nonstandard analysis on the other hand, we find that the standard upper value agrees with the nonstandard lower value when the supremum in the right-hand side of \cref{minimaxeq} is taken over the collection of internal priors, which may assign infinitesimal probability to (internal) events. 
This equivalence holds in complete generality;
thus, if one is willing to adopt internal priors, \cref{minimaxeq} holds for arbitrary decision problems without technical conditions such as compactness of the parameter space or continuity of the risk functions. 

By using this nonstandard minimax theorem as a master theorem, we are able to derive several standard minimax theorems. 
Thus, the present paper is another example (following \citep{DR2017, DRS2021}) of nonstandard analysis deriving foundational results in statistical decision theory. 

\subsection{Nonstandard Analysis}
Rather than working in the standard mathematical framework, we work within the normal Bayesian theory but carry out that work using nonstandard analysis, a powerful machinery first developed by \citet{AR65} using ideas from mathematical logic. These tools have recently been used to solve some fundamental problems in statistics. \citet{DR2017} obtain a general complete class theorem connecting Bayesian and frequentist optimality. \citet{nscredible} establish the existence of matching priors for statistical problems with compact parameter space, generalizing a result of \citet{muller2016coverage}. 
Duanmu, Roy, and Schrittesser \citet{DRS2021} establish an exact characterization of admissibility in arbitrary decision problems.

Generally speaking, nonstandard analysis offers powerful tools for extending existence results in finite settings to continuous ones. Moreover, almost every existence result coming from this approach automatically produces a convergence theorem. 
All this suggest that there are many more potential applications of nonstandard analysis within statistics. 

The utility of using nonstandard models stems from the use of hyperfinite objects, which are infinite objects but nonetheless possess exactly the same properties (in the requisite internal logic) as do finite objects. Under moderate conditions, hyperfinite objects are linked to standard objects via suitable maps from the nonstandard model to the standard model. Thus, in many cases, nonstandard analysis is a natural tool for extending known results in finite settings to a more general setting. 
In this paper, we use the finite version of the minimax theorem in concert with nonstandard priors with hyperfinite support to show that 
\[
\inf_{\delta\in \FRRE}\sup_{\theta\in \Theta}\Risk(\theta,\delta)=\sup_{\Pi\in \NSE{\PM{\Theta}}}\inf_{\delta\in \FRRE}\EST(\Risk(\Pi,\NSE{\delta}))
\]
for arbitrary statistical decision problems. The left side is the standard minimax value while the right side is the nonstandard maximin value over the collection of all nonstandard priors. 
Under suitable regularity conditions, we can push down nonstandard priors to obtain standard priors, maintaining the desirable properties of their nonstandard source. 
In this manner, we establish several standard minimax theorems based on the push-down of nonstandard priors. 

It is worth mentioning that \citet{teddy2017} establish a very general complete class theorem as well as a minimax theorem using finitely additive priors and finitely additive randomization. 
This suggests a deep connection between nonstandard decision theory and decision theory based on finitely additive priors. 

\subsection{Overview of the paper}
In \cref{secpre},
we introduce basic notions and key results in standard statistical decision theory:
randomized decision procedures, minimax decision procedures, least favorable priors,
and the minimax theorem for statistical decision problems with finite parameter space.
Classic treatments can be found in
\citep{Ferguson} and \citep{BG54}, the latter emphasizing the connection with game theory, but restricting itself to finite discrete spaces.
A modern treatment can be found in \citep{LC98}. 

In \cref{secnsmini}, we establish a completely general nonstandard minimax theorem. Using nonstandard analysis in concert with the classical hyperplane separation theorem, we show that for every statistical decision problem, the standard minimax value is equivalent to the nonstandard maximin value over the collection of all nonstandard priors. We also establish some connections between nonstandard minimaxity and a novel notion of Bayesian optimality. In particular, we show that, a minimax decision procedure has infinitesimal excess Bayes risk with respect to some nonstandard prior. 

In \cref{pushdownsec}, we derive three standard minimax theorems from the nonstandard minimax theorem. Namely, we show that
\begin{enumerate}
\item 
For statistical decision problems with compact parameter space and upper semi-continuous risk functions it holds that
\[
\inf_{\delta\in \FRRE}\sup_{\theta\in \Theta}\Risk(\theta,\delta)=\sup_{\pi\in \PM{\Theta}}\inf_{\delta\in \FRRE}\Risk(\pi,\delta).
\]

\item 
For statistical decision problems with bounded risk functions it holds that
\[
\inf_{\delta\in \FRRE}\sup_{\theta\in \Theta}\Risk(\theta,\delta)=\sup_{\pi\in \FM{\Theta}}\inf_{\delta\in \FRRE}\Risk(\pi,\delta),
\]
where $\FM{\Theta}$ denote the collection of finitely additive priors. 

\item 
For statistical decision problems with a totally bounded metric space as the parameter space and Lipschitz risk functions it holds that
\[
\inf_{\delta\in \FRRE}\sup_{\theta\in \Theta}\Risk(\theta,\delta)=\sup_{\pi\in \FM{\Theta}}\inf_{\delta\in \FRRE}\Risk(\pi,\delta)=\sup_{\pi\in \PM{\Theta}}\inf_{\delta\in \FRRE}\Risk(\pi,\delta).
\]
Moreover, there exists a sequence $\{\pi_n\}_{n\in \Nats}$ of finitely supported priors such that $\lim_{n\to \infty}\inf_{\delta\in \FRRE}\Risk(\pi_n,\delta)=\inf_{\delta\in \FRRE}\sup_{\theta\in \Theta}\Risk(\theta,\delta)$. 
\end{enumerate}

The third minimax theorem contains a convergence result which echoes with the fact that most existence results obtained from nonstandard analysis also produce a convergence result.  
Using these minimax theorems, we also establish connections between minimaxity and Bayesian optimality. 

\section{Preliminaries} \label{secpre}
A (non-sequential) statistical decision problem is defined in terms of
a \emph{parameter} space $\Theta$,  each element of which represents a possible state of nature;
a set $\AS$ of \defn{actions} available to the statistician;
a function $\Loss : \Theta \times \AS \to \NNReals$ characterizing the \defn{loss} associated with taking action $a \in \AS$ in state $\theta \in \Theta$;
and finally, a family $P=(P_\theta)_{\theta \in \Theta}$ of probability measures on a
\emph{sample} space $X$ with a common $\sigma$-algebra of measurable sets.
On the basis of an observation from $P_{\theta}$ for some unknown element $\theta \in \Theta$,
the statistician decides to take a (potentially randomized) action $a$, and then suffers the loss $\Loss(\theta,a)$.

To be precise, having fixed a $\sigma$-algebra on the space $\AS$ of actions,
every possible response by the statistician is
captured by a \defn{(randomized) decision procedure},
i.e., a map $\delta$ from $X$ to the space $\ProbMeasures{\AS}$ of probability measures $\AS$ such that the maps $x \mapsto \int_{\AS} \Loss(\theta,a) (\delta(x))(\dee a)$ are measurable for every $\theta\in \Theta$. 
As is customary, we will write $\delta(x,A)$ for $(\delta(x))(A)$.
The expected loss, or \defn{risk}, of the statistician associated with following a decision procedure $\delta$
conditional on the state $\theta$,
is
\[
\Risk_{\delta}(\theta)
= \Risk(\theta,\delta)
= \int_{X} \Bigl [ \int_{\AS} \Loss(\theta,a) \delta(x,\dee a) \Bigr ] \Model_{\theta}(\dee x).
\]
As the maps $x \mapsto \int_{\AS} \Loss(\theta,a) \delta(x,\dee a)$ are measurable for every $\theta \in \Theta$, the risk functions are well-defined
as functions to the extended reals. 
Let $\FRRE$ denote the set of randomized decision procedures. 
Note that it is possible that there exists a $\delta\in \FRRE$ such that $\Risk_{\delta}(\theta)=\infty$ for all $\theta\in \Theta$.

The set $\FRRE$ may be viewed as a convex subset of a vector space.
In particular, for all $\delta_1,\dotsc,\delta_n \in \FRRE$ and $p_1,\dotsc,p_n \in \NNReals$ with $\sum_i p_i = 1$,
define $\sum_i p_i \delta_i : X \to \ProbMeasures{\AS}$ by $(\sum_i p_i \delta_i)(x) = \sum_i p_i \delta_i(x)$ for $x \in X$.
Then $\Risk(\theta, \sum_i p_i \delta_i) = \sum_i p_i \, \Risk(\theta,\delta_i) < \infty$, and so we see that $\sum_i p_i \delta_i \in \FRRE$ and $r(\theta,\argdot)$ is a linear function on $\FRRE$ for every $\theta \in \Theta$.
For a subset $D \subseteq \FRRE$, let $\conv(D)$ denote the set of all finite convex combinations of decision procedures $\delta \in D$.
A subset $D\subseteq \FRRE$ is convex if $D=\conv(D)$. 

A decision procedure $\delta \in \FRRE$ is \defn{non-randomized}
if, for all $x \in X$, there exists $d(x) \in \AS$ such that $\delta(x,A) = 1$ if and only if $ d(x) \in A$, for all measurable sets $A \subseteq \AS$.
Let $\FiniteRiskEstimators \subseteq \FRRE$ denote the subset of all non-randomized decision procedures.
Under mild measurability assumptions,
every $\delta \in \FiniteRiskEstimators$ can be associated with a map $x \mapsto d(x)$ from $X$ to $\AS$
for which the risk satisfies
$
\Risk(\theta,\delta) = \int_{X} \Loss(\theta,d(x)) \Model_{\theta}(\dee x).
$

Consider now the Bayesian framework, in which one adopts a \defn{prior}, i.e., a probability measure $\pi$ defined on some $\sigma$-algebra on $\Theta$. We use $\ProbMeasures{\Theta}$ to denote the set of probability measures on $\Theta$. The Bayes risk of a randomized decision procedure $\delta\in \FRRE$ with respect to some prior $\pi\in \ProbMeasures{\Theta}$ is
\[
\Risk(\pi,\delta)=\int_{\Theta}\Risk(\theta,\delta)\pi(\dee \theta).
\]

\begin{definition}\label{lstfav}
A prior distribution $\pi_0$ is said to be least favorable if 
\[
\inf_{\delta\in \FRRE}\Risk(\pi_0,\delta)=\sup_{\pi\in \ProbMeasures{\Theta}}\inf_{\delta\in \FRRE}\Risk(\pi,\delta).
\]
\end{definition}

The concept of least favorable priors is closely related to the minimax theorem, the finite version of which is stated below.

\begin{theorem}[{\citep[][Thm.~2.9.1]{Ferguson}}]\label{minimaxthm}
Let a statistical decision problem be given where $\Theta$ is finite. Then we have
\[\label{faminimaxeq}
\inf_{\delta\in \FRRE}\sup_{\theta\in \Theta}\Risk(\theta,\delta)=\sup_{\pi\in \ProbMeasures{\Theta}}\inf_{\delta\in \FRRE}\Risk(\pi,\delta),
\]
and there exists a least favorable prior $\pi_0$. 
\end{theorem}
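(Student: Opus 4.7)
My plan is the classical convex-geometric proof via hyperplane separation. The easy direction of the minimax equality follows from $\Risk(\pi,\delta) \leq \sup_\theta \Risk(\theta,\delta)$ after taking $\inf_\delta$ then $\sup_\pi$. The content is thus the reverse inequality together with attainment of the supremum.

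Enumerate $\Theta=\{\theta_1,\ldots,\theta_n\}$ and let $v \defas \inf_{\delta\in\FRRE}\sup_{\theta}\Risk(\theta,\delta)$. The case $v=\infty$ is handled separately: since $\Theta$ is finite, the supremum $\sup_\theta \Risk(\theta,\delta)$ is a maximum, so for each $\delta$ some coordinate is infinite, and consequently any fully-supported prior $\pi_0$ satisfies $\Risk(\pi_0,\delta)=\infty$ for every $\delta$, yielding $\inf_\delta \Risk(\pi_0,\delta)=\infty=v$. When $v<\infty$, I form the risk set
\[
S \defas \{(\Risk(\theta_1,\delta),\ldots,\Risk(\theta_n,\delta)) : \delta\in \FRRE\}\subseteq [0,\infty]^n
\]
together with its upward hull $S^{+}\defas(S\cap\Reals^n)+\NNReals^n$. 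Convexity of $\FRRE$ under randomization plus linearity of each $\Risk(\theta_i,\argdot)$ on $\FRRE$ (both noted in \cref{secpre}) makes $S\cap \Reals^n$, and hence $S^{+}$, convex. The open convex set $Q\defas\{x\in\Reals^n : x_i<v \text{ for all } i\}$ is disjoint from $S^{+}$, since any common point would produce a $\delta$ with $\max_i \Risk(\theta_i,\delta)<v$, contradicting the definition of $v$.

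Hyperplane separation in $\Reals^n$ then furnishes a nonzero vector $\pi=(\pi_1,\ldots,\pi_n)$ and a scalar $c$ with $\pi\cdot x < c$ on $Q$ and $\pi\cdot x \geq c$ on $S^{+}$. Coordinatewise upward-closedness of $S^{+}$ forces each $\pi_i\geq 0$; rescaling to $\sum_i \pi_i=1$, I view $\pi$ as an element of $\ProbMeasures{\Theta}$. Letting $x$ approach $(v,\ldots,v)$ from within $Q$ gives $c\geq v$, and for each $\delta\in \FRRE$ the vector $(\Risk(\theta_i,\delta))_i$ lies in $S^{+}$, so
\[
\Risk(\pi,\delta) = \sum_i \pi_i \Risk(\theta_i,\delta) \geq c \geq v.
\]
Taking $\inf_\delta$ and combining with the easy direction closes the minimax equality and simultaneously exhibits $\pi$ as least favorable.

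There is no deep obstacle in the finite case, but what deserves emphasis is that the three load-bearing ingredients --- convexity and upward-closedness of the risk set in finite-dimensional Euclidean space, availability of hyperplane separation in that space, and the automatic countable additivity of a nonnegative normalized weight vector on a finite set --- all rely essentially on $\Theta$ being finite. These are precisely the features that fail for infinite $\Theta$, which is exactly what motivates the nonstandard program pursued in the rest of the paper.
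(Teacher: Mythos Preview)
The paper does not give its own proof of this result; it is quoted from \citep{Ferguson}. Your argument is the classical separating-hyperplane proof, and it is precisely the engine the paper reuses (on finite subsets $\Theta_0 \subseteq \Theta$) inside the proof of \cref{nsminimax}; there the analogue of your $Q$ is the lower quadrant $Q_v$ and the analogue of your risk set is the regularized risk set $\RS{v,\conv(D)}$.

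There is one genuine slip in the case $v<\infty$. You assert that ``for each $\delta\in\FRRE$ the vector $(\Risk(\theta_i,\delta))_i$ lies in $S^{+}$.'' Since $S^{+}\subseteq\Reals^n$ by construction, this is false for any $\delta$ with $\Risk(\theta_j,\delta)=\infty$ for some $j$; such procedures exist in the paper's framework even when the minimax value is finite. If the separating $\pi$ happens to have $\pi_j=0$ at exactly those coordinates, the inequality $\Risk(\pi,\delta)\geq v$ does not follow from the separation, and your $\pi$ need not be least favorable. The paper's proof of \cref{nsminimax} meets the same obstacle and resolves it by regularizing, replacing $r_\delta$ by $r^v_\delta$ (which caps infinite values at $v+1$) and then invoking $\langle\pi,r^v_\delta\rangle\leq\Risk(\pi,\delta)$. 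An alternative repair in your finite setting is to perturb to $\pi_\epsilon=(1-\epsilon)\pi+\epsilon u$ with $u$ uniform: infinite-risk $\delta$ then give $\Risk(\pi_\epsilon,\delta)=\infty$, finite-risk $\delta$ give $\Risk(\pi_\epsilon,\delta)\geq(1-\epsilon)v$, and compactness of the simplex together with upper semicontinuity of $\pi\mapsto\inf_\delta\Risk(\pi,\delta)$ recovers a genuine least favorable prior.
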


It is mathematically simpler to work with non-randomized decision procedures. However, \cref{minimaxthm} fails without randomized decision procedures. This is related to the fact that Nash equilibria in mixed strategies exist in far more games than Nash equilibria in pure strategies. Nonetheless, we point out the following relation between randomized decision procedures and non-randomized decision procedures. 
\[
&\inf_{\delta\in \FiniteRiskEstimators}\sup_{\theta\in \Theta}\Risk(\theta,\delta)\geq \inf_{\delta\in \FRRE}\sup_{\theta\in \Theta}\Risk(\theta,\delta)\\
&\sup_{\pi\in \PM{\Theta}}\inf_{\delta\in \FiniteRiskEstimators}\Risk(\pi,\delta)\geq \sup_{\pi\in \PM{\Theta}}\inf_{\delta\in \FRRE}\Risk(\pi,\delta).
\]

We conclude this section with the following example showing that \cref{minimaxthm} (specifically \cref{faminimaxeq}) need not hold for statistical decision problems with infinite parameter spaces. This counterexample can be found in \citep{Ferguson}. 

\begin{example}\label{canexample}
Let $X=\{0\}$ and $\Theta=\AS=\{\frac{1}{n}: n\in \Nats\}$. Let 
\[\Loss(\theta,a) = \left\{
  \begin{array}{lr}
    1, & \theta < a,\\
    0, & \theta=a,\\
    -1, & \theta>a.
  \end{array}
\right.
\]

\begin{claim}\label{neqclaim}
For every decision procedure $\delta\in \FRRE$, we have $\sup_{\pi\in \PM{\Theta}}\Risk(\pi,\delta)=1$. 
For every $\pi\in \PM{\Theta}$, we have $\inf_{\delta\in \FiniteRiskEstimators}\Risk(\pi,\delta)=-1$.
\end{claim}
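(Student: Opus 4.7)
The plan is to exploit the extreme simplicity of the sample space $X=\{0\}$, which reduces every randomized decision procedure $\delta\in \FRRE$ to a single probability measure $\mu_\delta \defas \delta(0,\argdot)$ on the countable action space $\AS=\{1/n:n\in \Nats\}$, and every non-randomized procedure in $\FiniteRiskEstimators$ to a Dirac measure $\delta_{n_0}$ concentrated at some action $1/n_0$. Under this identification, and using that $\Loss(1/m,1/n)$ equals $+1$ when $n<m$ and $-1$ when $n>m$, the risk at $\theta=1/m$ takes the explicit form
\[
\Risk(1/m,\delta) = \mu_\delta(\{1/n:n<m\}) - \mu_\delta(\{1/n:n>m\}).
\]

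For the first assertion, I fix $\delta \in \FRRE$ and let $\pi_m\in \PM{\Theta}$ be the Dirac prior at $1/m$. Since $\mu_\delta$ is a countably additive probability measure on the countable set $\AS$, letting $m\to\infty$ forces $\mu_\delta(\{1/n:n<m\})\to 1$ and $\mu_\delta(\{1/n:n>m\})\to 0$, so $\Risk(\pi_m,\delta)=\Risk(1/m,\delta)\to 1$. This yields $\sup_{\pi\in \PM{\Theta}}\Risk(\pi,\delta)\geq 1$, and the reverse inequality is immediate from $\Loss\leq 1$, which forces every Bayes risk to be at most $1$.

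For the second assertion, I fix $\pi\in \PM{\Theta}$ and consider the non-randomized procedure $\delta_{n_0}\in \FiniteRiskEstimators$ that always plays action $1/n_0$. A direct calculation gives
\[
\Risk(\pi,\delta_{n_0}) = \pi(\{1/m:m>n_0\}) - \pi(\{1/m:m<n_0\}),
\]
and countable additivity of $\pi$ on the countable space $\Theta$ yields $\pi(\{1/m:m>n_0\})\to 0$ and $\pi(\{1/m:m<n_0\})\to 1$ as $n_0\to\infty$. Hence $\Risk(\pi,\delta_{n_0})\to -1$, and the matching lower bound comes from $\Loss\geq -1$.

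There is no genuine obstacle here: the entire content of the claim is that on a countable parameter space, the countable additivity of $\mu_\delta$ and of $\pi$ lets us push essentially all of their mass beyond any finite threshold, so by choosing $\theta$ very small in Part 1 (forcing $\theta<a$ with high probability) and by choosing $a$ very small in Part 2 (forcing $\theta>a$ with high probability) we drive the risk to the extreme values $\pm 1$ permitted by $\Loss$. The role of the claim in the paper is simply to exhibit a concrete failure of \cref{faminimaxeq} on an infinite parameter space, so a direct computation is more illuminating than any appeal to general theory.
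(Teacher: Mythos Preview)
Your proof is correct and follows essentially the same approach as the paper: both arguments exploit that $\mu_\delta$ and $\pi$ are countably additive on a countable set, so their mass can be captured by a finite set (equivalently, by choosing $m$ or $n_0$ large enough), after which a Dirac prior at a sufficiently small $\theta_0$ (Part~1) or a Dirac procedure at a sufficiently small action (Part~2) forces the loss to be $\pm 1$ on that mass. The only cosmetic difference is that you phrase this as a limit along $m\to\infty$ and $n_0\to\infty$ with an explicit risk formula, while the paper uses an $\epsilon$--finite-set formulation; these are interchangeable.
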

\begin{proof}
Suppose $\delta\in \FRRE$ and a positive $\epsilon\in \Reals$ are given. 
Find a finite set $K\subset \AS$ such that $\delta(0,K)>1-\epsilon$;
moreover, find $\theta_{0}\in \Theta$ such that $\theta_0<k$ for every $k\in K$.  
Let $\pi_0$ be a prior that is degenerate at $\theta_0$. 
Then we have 
\[
\int \Risk(\theta,\delta) \dee \pi_0&=\Risk(\theta_0,\delta)\\
&=\int_{K}\Loss(\theta,a)\delta(0,\dee a)+\int_{\AS\setminus K}\Loss(\theta,a)\delta(0,\dee a)\\
&>1-\epsilon-\epsilon=1-2\epsilon.
\]
As $\epsilon$ was arbitrary, we have $\sup_{\pi\in \PM{\Theta}}\Risk(\pi,\delta)=1$.

Now let $\pi\in \PM{\Theta}$ and $\epsilon>0$ be given. 
Then there exists a finite set $B\subset \Theta$ such that $\pi(B)>1-\epsilon$. 
Choose $\theta_0$ such that $\theta_0<\theta$ for every $\theta\in B$. Define a nonrandomized decision procedure $\delta_0$ that puts all its mass at $d_0(0) = \theta_0$.  
Then we have 
\[
\int \Risk(\theta,\delta_0)\dee \pi&=\int_{B}\Risk(\theta,\delta_0) \dee \pi+\int_{\Theta\setminus B}\Risk(\theta,\delta_0) \dee \pi\\
&< -(1-\epsilon)+\epsilon=2\epsilon-1.
\]
As $\epsilon$ was arbitrary, we have $\inf_{\delta\in \FiniteRiskEstimators}\Risk(\pi,\delta)=-1$.
\end{proof}

Thus, we have $\inf_{\delta\in \FRRE}\sup_{\theta\in \Theta}\Risk(\theta,\delta)\neq \sup_{\pi\in {\PM{\Theta}}}\inf_{\delta\in \FiniteRiskEstimators}{\Risk(\pi,\delta)}$ which means that the minimax equality fails. 

It is usually easier for a least favorable prior to exists than for the minimax equality to hold. It is easy to see that every prior is a least favorable prior in the statistical decision problem given in \cref{canexample}. However, there are examples in statistical decision theory where least favorable priors do not exist. 
\end{example}
There exists a rich literature on minimax theorems for statistical decision problems with infinite parameter spaces. 
We give proofs of several more general (well-known) minimax theorems in \cref{pushdownsec}. 

\subsection{Notation from Nonstandard Analysis}

In this paper, we work within the standard decision theoretical framework but carry out that work using nonstandard analysis, a powerful machinery derived from mathematical logic. For those who are not familiar with nonstandard analysis, \citet[][App.~A]{DR2017} provide a review tailored to their statistical application.
\citet{NSAA97,NDV,NAW} provide thorough introductions.

We briefly introduce the setting and notation from nonstandard analysis. 
We use $\NSE{}$ to denote the nonstandard extension map taking elements, sets, functions, relations, etc., to their nonstandard counterparts.
In particular, $\HReals$ and $\NSE{\Nats}$ denote the nonstandard extensions of the reals and natural numbers, respectively.
We will always assume that $\NSE x = x$ for $x \in \Reals \cup \Nats \cup \Theta$.

An element $r\in \HReals$ is \emph{infinite} or \emph{unlimited} if $r>n$ for every $n\in \Nats$ and is \emph{finite} or \emph{limited} otherwise. 
Given a topological space $(X,\topology)$,
the monad of a point $x\in X$ is the set $\bigcap_{x\in U\in \topology}\NSE{U}$.
An element $x\in \NSE{X}$ is \emph{near-standard} if it is in the monad of some $y\in X$. 
We say $y$ is the standard part of $x$ and write $y=\ST(x)$. 
We use $\NS{\NSE{X}}$ to denote the collection of near-standard elements of $\NSE{X}$ 
and we say $\NS{\NSE{X}}$ is the \emph{near-standard part} of $\NSE{X}$. 
The standard part map $\ST$ is a function from $\NS{\NSE{X}}$ to $X$, taking near-standard elements to their standard parts.
In both cases, the notation elides the underlying space $Y$ and the topology $T$, 
both of which will always be clear from context.
For a metric space $(X,d)$, two elements $x,y\in \NSE{X}$ are \emph{infinitely close} if $\NSE{d}(x,y)\approx 0$. 
There exists a close connection between finite elements and near-standard elements. In particular, we quote the following well-known result:
\begin{theorem}\label{fanear}
An element $r\in \HReals$ is finite if and only if it is near-standard. 
\end{theorem}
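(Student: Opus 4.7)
The plan is to prove the two implications separately, with the reverse implication being almost immediate and the forward implication resting on Dedekind completeness of $\Reals$ applied \emph{externally}.

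For the easy direction, suppose $r\in \NS{\HReals}$ with standard part $y=\ST(r)\in \Reals$. Then by definition $r$ lies in $\NSE{U}$ for every standard open $U\ni y$; taking $U=(y-1,y+1)$ yields $y-1<r<y+1$ in $\HReals$, so $|r|<|y|+1$. Since $|y|+1$ is a standard real, $r$ is finite.

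For the converse, suppose $r\in \HReals$ is finite. I would set $A=\{s\in \Reals : s\leq r\}$, where the inequality is interpreted in $\NSE{\Reals}$. Finiteness of $r$ yields a standard $N\in \Nats$ with $-N\leq r\leq N$, so $A$ is nonempty and bounded above in $\Reals$. Invoking Dedekind completeness of the standard reals, let $y=\sup A\in \Reals$. To conclude, I would argue that $r\approx y$. Suppose towards contradiction that some standard $\epsilon>0$ satisfies $|r-y|\geq \epsilon$. If $r\geq y+\epsilon$, then $y+\epsilon/2\in A$, which contradicts $y$ being an upper bound of $A$. If $r\leq y-\epsilon$, then every $s\in A$ satisfies $s\leq r\leq y-\epsilon$, so $y-\epsilon/2$ is a strictly smaller upper bound of $A$, contradicting the minimality of $y$. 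Hence $|r-y|$ is smaller than every standard $\epsilon>0$, so $r\approx y$ and $r\in \NS{\HReals}$ with $\ST(r)=y$.

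The only conceptual subtlety is that the supremum argument cannot be carried out internally in $\NSE{\Reals}$, since an internal supremum would merely produce a hyperreal; the whole point is to obtain a \emph{standard} element $y$, which requires applying completeness of $\Reals$ in the ambient standard universe. Apart from that, the argument is entirely routine and classical.
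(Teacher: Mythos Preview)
Your proof is correct and is the classical argument via Dedekind completeness that one finds in standard references on nonstandard analysis. The paper itself does not prove this result: it introduces \cref{fanear} with the phrase ``we quote the following well-known result'' and gives no argument. So there is nothing in the paper to compare your approach against; you have simply supplied the standard proof that the paper omits.

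One minor point worth noting: the paper's definition of ``infinite'' literally reads ``$r>n$ for every $n\in \Nats$'', which taken at face value would make negative unlimited hyperreals count as finite, and the theorem would then fail in one direction. Your proof implicitly (and correctly) reads the definition as $|r|>n$ for every $n\in \Nats$, which is clearly what is intended. This is an imprecision in the paper, not in your argument.
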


We shall have opportunity to work with a variant of the usual standard part map: By the ``usual'' map we mean
\[
\ST{}\colon \NS{\HReals} \to \Reals
\]
where $\HReals$ is (of course) equipped with its usual topology.
We will also make use of the extended reals $\EReals$, that is, the usual two-point compactification of $\Reals$, with underlying set $\Reals\cup\{\infty,-\infty\}$, familiar from measure theory and real analysis. 
In this case it holds that $\NS{\NSE{\EReals}} = \NSE{\EReals}$ and $\NSE{\EReals} = \overline{\HReals}$, where on the right of the last equality, 
we mean the space obtained by applying the construction of the two-point compactification internally to $\HReals$. This space has underlying set
$\HReals\cup\{\infty,-\infty\}$ and the monad of $\infty$ (resp., $-\infty$) is the sets of positive (resp., negative) infinite hyperreals.
We shall write $\EST$ for the corresponding standard part map:
\[
\EST{}\colon  \NSE{\EReals} \to \EReals.
\]
This map can be described simply as follows: For $\tilde r \in \NSE{\EReals}$,
\[
\EST(\tilde r) = \begin{cases} 
			\ST(\tilde r)	& \text{ if $\tilde r$ is finite,}\\
			\infty			&\text{ if $\tilde r$ is unlimited and $\tilde r > 0$,}\\
			-\infty		&\text{ if $\tilde r$ is unlimited and $\tilde r < 0$.}
\end{cases}
\]

For two internal sets $A,B$, we use $\IFuncs{A}{B}$ to denote the collection of internal functions from $B$ to $A$. 
Given a standard set $X$, we use ${^{\sigma}X}$ to denote the set $\{\NSE{x}: x\in X\}$, called the standard part copy of $X$.

Let $X$ be a topological space endowed with Borel $\sigma$-algebra $\BorelSets X$ and 
let $\FM{X}$ denote the collection of all finitely additive probability measures on $(X,\BorelSets X)$. 
An internal probability measure $\mu$ on $(\NSE{X},\NSE{\BorelSets X})$ is an element of $\NSE{\FM{X}}$. 
Namely, an internal probability measure $\mu$ on $(\NSE{X},\NSE{\BorelSets X})$ 
is an internal function from $\NSE{\BorelSets X}\to \NSE{[0,1]}$ such that
\begin{enumerate}
\item $\mu(\emptyset)=0$;
\item $\mu(\NSE{X})=1$; and
\item $\mu$ is hyperfinitely additive.
\end{enumerate}
The Loeb space of the internal probability space $(\NSE{X},\NSE{\BorelSets X}, \mu)$ is a countably additive probability space $(\NSE{X},\Loeb{\NSE{\BorelSets X}}, \Loeb{\mu})$ such that 
\[
\Loeb{\NSE{\BorelSets X}}=\{A\subset \NSE{X}|(\forall \epsilon>0)(\exists A_i,A_o\in \NSE{\BorelSets X})(A_i\subset A\subset A_o\wedge \mu(A_o\setminus A_i)<\epsilon)\}.
\]
and 
\[
\Loeb{\mu}(A)=\sup\{\ST(\mu(A_i))|A_i\subset A,A_i\in \NSE{\BorelSets X}\}=\inf\{\ST(\mu(A_o))|A_o\supset A,A_o\in \NSE{\BorelSets X}\}.
\]

The remarkable utility of nonstandard analysis is embodied by the \emph{transfer principle}, which asserts that a first order sentence is true in the standard model 
if and only if it is true in the nonstandard extension after all parameters have been replaced by their image under the star map. 
Finally, given a cardinal number $\kappa$, a nonstandard model is called $\kappa$-saturated if the following condition holds:
For any family $\cF$ of cardinality less than $\kappa$ consisting of internal sets which moreover has the finite intersection property, the total intersection of $\cF$ is non-empty. In this paper, we assume our nonstandard model is as saturated as we need, that is, $\kappa$-saturated for some large enough $\kappa$.

\section{The Nonstandard Minimax Theorem}\label{secnsmini}
In this section, we present a nonstandard minimax theorem for arbitrary statistical decision problems under very mild conditions. We fix a standard statistical decision problem $(X,\Theta,\AS, \Loss,\Model)$. Let $\Theta$ be equipped with a $\sigma$-algebra $\cF$ that contains all singletons. 

Recall that for infinite $r\in \NSE{\Reals}$, we have by definition $\EST(r)=\infty$. 

\begin{theorem}\label{minimaxgtr}
For every $D\subseteq \FRRE$,
we have 
\[
\inf_{\delta\in D}\sup_{\theta\in \Theta}\Risk(\theta,\delta)\geq \sup_{\Pi\in \NSE{\ProbMeasures{\Theta}}}\inf_{\delta\in D}\EST(\NSE{\Risk}(\Pi,\NSE{\delta})).
\]
\end{theorem}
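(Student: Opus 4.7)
The statement is the weak-duality direction of the minimax equality, and the plan is to combine a standard $\epsilon$-approximation on the infimum side with the transfer principle. The key observation is that once a near-optimal $\delta_{\epsilon}$ is fixed on the left-hand side, its risk is uniformly bounded on $\Theta$ by a standard real; that bound then transfers to $\NSE{\Theta}$ and survives integration against any internal prior.

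Set $L := \inf_{\delta\in D}\sup_{\theta\in \Theta}\Risk(\theta,\delta)$. If $L=\infty$ (including the trivial case $D=\emptyset$, where both sides are $+\infty$), the desired inequality is immediate, so assume $L<\infty$. Given $\epsilon>0$, choose $\delta_{\epsilon}\in D$ with $\sup_{\theta\in\Theta}\Risk(\theta,\delta_{\epsilon})\le L+\epsilon$; in particular the risk of $\delta_{\epsilon}$ is finite at every $\theta\in\Theta$. The standard sentence
\[
(\forall\theta\in\Theta)\;\Risk(\theta,\delta_{\epsilon})\le L+\epsilon
\]
transfers to
\[
(\forall\theta\in\NSE{\Theta})\;\NSE{\Risk}(\theta,\NSE{\delta_{\epsilon}})\le L+\epsilon,
\]
the right-hand side being a standard finite constant.

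For an arbitrary internal prior $\Pi\in\NSE{\ProbMeasures{\Theta}}$, transfer of the defining equation for Bayes risk gives
\[
\NSE{\Risk}(\Pi,\NSE{\delta_{\epsilon}}) \,=\, \NSE{\int}_{\NSE{\Theta}} \NSE{\Risk}(\theta,\NSE{\delta_{\epsilon}})\,\Pi(\dee\theta),
\]
and (internal) monotonicity of the integral, combined with the uniform bound above and $\Pi(\NSE{\Theta})=1$, yields $\NSE{\Risk}(\Pi,\NSE{\delta_{\epsilon}})\le L+\epsilon$. Since this is a finite hyperreal, $\EST$ coincides with $\ST$ there and the inequality is preserved: $\EST(\NSE{\Risk}(\Pi,\NSE{\delta_{\epsilon}}))\le L+\epsilon$. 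Hence $\inf_{\delta\in D}\EST(\NSE{\Risk}(\Pi,\NSE{\delta}))\le L+\epsilon$ for every $\Pi$, so taking the supremum over $\Pi$ and then $\epsilon\downarrow 0$ yields the claimed inequality.

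I do not anticipate a real obstacle, since this is only the easy (weak-duality) half of minimax. The only mild subtleties are that $\Risk$ is a priori extended-real-valued and that $D$ is not assumed convex or measurable in any way; neither matters here, because the bound $L+\epsilon$ forces the risk of the chosen $\delta_{\epsilon}$ to be finite on all of $\Theta$ (hence on all of $\NSE{\Theta}$ by transfer), and no structure on $D$ beyond the definition of infimum is used.
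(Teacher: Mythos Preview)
Your proof is correct and follows essentially the same approach as the paper's: transfer the pointwise bound on the risk to $\NSE{\Theta}$, use monotonicity of the internal integral to bound $\NSE{\Risk}(\Pi,\NSE{\delta})$, and pass to standard parts. The only cosmetic difference is that the paper fixes an arbitrary $\delta_0\in D$ (handling $\sup_{\theta}\Risk(\theta,\delta_0)=\infty$ separately) rather than first selecting a near-optimal $\delta_\epsilon$, which lets it avoid the final $\epsilon\downarrow 0$ step.
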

\begin{proof}
Fix a set $D\subseteq \FRRE$. 
Let $\delta_0\in D$ and $\Pi_0\in \NSE{\ProbMeasures{\Theta}}$ be arbitrary. To finish the proof, it is sufficient to show that
\[
\sup_{\theta\in \Theta}\Risk(\theta,\delta_0)\geq \inf_{\delta\in D}\EST(\NSE{\Risk}(\Pi_0,\NSE{\delta})).
\]
If $\sup_{\theta\in \Theta}\Risk(\theta,\delta_0)=\infty$, then the result is obvious. 
Therefore, we may suppose $\sup_{\theta\in \Theta}\Risk(\theta,\delta_0)\in \Reals$. 
Then
\[
\sup_{\theta\in \Theta}\Risk(\theta,\delta_0)=\sup_{\theta\in \NSE{\Theta}}\NSE{\Risk}(\theta,\NSE{\delta_0})\geq \NSE{\Risk}(\Pi_0,\NSE{\delta_0}).
\]
The last inequality above still holds if we take standard parts, and thus
\[
\sup_{\theta\in \Theta}\Risk(\theta,\delta_0) \geq \ST(\NSE{\Risk}(\Pi_0,\NSE{\delta_0}))\geq \inf_{\delta\in D}\EST(\NSE{\Risk}(\Pi_0,\NSE{\delta})). 
\]
\end{proof}

We will also need the following lemma; the proof is identical to that of \cite[Lemma~3.6]{DRS2021}.
\begin{lemma}\label{l.Theta_0}
Fix $v \in \Reals$.
Suppose $D$ is a finite subset of $\cD$ such that for each $\delta \in \conv(D)$ there is $\theta \in \Theta$ with $\Risk(\theta,\delta)>v$.
Then there is a finite set $\Theta_0 \subseteq \Theta$ such that
for each $\delta \in \conv(D)$ there is $\theta \in \Theta_0$ with $\Risk(\theta,\delta)>v$.
\end{lemma}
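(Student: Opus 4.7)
The plan is to reduce the claim to the compactness of a finite-dimensional simplex. Enumerate $D = \{\delta_1,\ldots,\delta_n\}$ and parametrize $\conv(D)$ by the standard simplex $\Delta_n = \{p\in\NNReals^n \st \sum_{i=1}^n p_i = 1\}$ via the surjection $p \mapsto \delta_p := \sum_i p_i \delta_i$. Using the linearity of the risk observed in \cref{secpre}, we have $\Risk(\theta,\delta_p) = \sum_i p_i \Risk(\theta,\delta_i)$, interpreted as a sum in $\EReals$.

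For each $\theta \in \Theta$, set $U_\theta = \{p\in\Delta_n \st \Risk(\theta,\delta_p) > v\}$. The hypothesis asserts precisely that $\{U_\theta\}_{\theta\in\Theta}$ covers $\Delta_n$. If each $U_\theta$ is open, then by compactness of $\Delta_n$ there is a finite subcover $\{U_{\theta_1},\ldots,U_{\theta_m}\}$, and I take $\Theta_0 = \{\theta_1,\ldots,\theta_m\}$. For any $\delta \in \conv(D)$, write $\delta = \delta_p$ and pick $j$ with $p \in U_{\theta_j}$ to get $\Risk(\theta_j,\delta) > v$, as required.

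The only nontrivial step is the openness of $U_\theta$, i.e., the lower semicontinuity of $p \mapsto \Risk(\theta,\delta_p)$ on $\Delta_n$. When every $\Risk(\theta,\delta_i)$ is finite this is immediate from continuity of an affine map. The subtle case is when some $\Risk(\theta,\delta_i) = +\infty$: if at $p \in U_\theta$ some such coordinate has $p_i > 0$, then $p_i>0$ persists in a neighborhood and the risk remains $+\infty > v$; if at $p \in U_\theta$ every coordinate with infinite risk has $p_i = 0$, then a small perturbation can only introduce additional $+\infty$ contributions while the finite part varies continuously, so the risk stays $>v$. Either way, $U_\theta$ is open.

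The main (and essentially only) obstacle is thus the coordinatewise case analysis required to handle $+\infty$ risk values; once this is addressed, the conclusion follows from a routine finite-subcover argument on $\Delta_n$, mirroring the strategy of \cite[Lemma~3.6]{DRS2021}.
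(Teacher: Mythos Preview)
Your argument is correct and is precisely the compactness-of-the-simplex approach that the paper defers to via \cite[Lemma~3.6]{DRS2021}: parametrize $\conv(D)$ by $\Delta_n$, observe that the sets $U_\theta$ form an open cover, and extract a finite subcover. Your case analysis for the $+\infty$ situation is fine and amounts to lower semicontinuity of $p\mapsto\sum_i p_i\Risk(\theta,\delta_i)$; if you want a one-line justification, note that this map equals $\sup_{M\in\Nats} \sum_i p_i \min\bigl(\Risk(\theta,\delta_i),M\bigr)$, a supremum of continuous affine functions.
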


The following hyperplane separation theorem is well-known.  

\begin{theorem}\label{hyperplanethm}
Fix any $n \in \Nats$. If $S_{1},S_{2}$ are two disjoint convex subsets of $\Reals^n$, then there exists $p\in \Reals^n \setminus \{\vzero\}$ such that for all $v_1\in S_{1}$ and $v_{2}\in S_{2}$, we have
$\IP{p}{v_{1}} \geq \IP{p}{v_{2}}$.
\end{theorem}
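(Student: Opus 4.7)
The plan is to apply the classical two-step reduction: replace the pair $(S_1,S_2)$ by a single convex set $C$ disjoint from $\vzero$, then separate $\vzero$ from $C$. First I set $C \defas S_1 - S_2 = \{v_1 - v_2 : v_1\in S_1,\, v_2\in S_2\}$. As the Minkowski sum of $S_1$ with $-S_2$, the set $C$ is convex, and $\vzero\notin C$ because $S_1\cap S_2 = \emptyset$. The desired inequality $\IP{p}{v_1}\geq\IP{p}{v_2}$ for all $v_i\in S_i$ is equivalent to $\IP{p}{v}\geq 0$ for every $v\in C$, so it suffices to produce a nonzero $p$ with this property.

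Next I would split cases by the position of $\vzero$ relative to the closure $\overline{C}$. If $\vzero\notin\overline{C}$, the continuous function $c\mapsto\|c\|^2$ attains its minimum on the nonempty closed convex set $\overline{C}\subseteq\Reals^n$, say at $c_0\neq\vzero$. For any $v\in\overline{C}$ and $t\in(0,1]$, convexity gives $(1-t)c_0 + tv\in\overline{C}$, so $\|c_0 + t(v-c_0)\|^2\geq\|c_0\|^2$; expanding, dividing by $t$, and letting $t\downto 0$ yields $\IP{c_0}{v}\geq\|c_0\|^2>0$. Thus $p \defas c_0$ works.

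If instead $\vzero\in\overline{C}$, then since $\vzero\notin C$ and $\ri(\overline{C}) = \ri(C)\subseteq C$ by a standard result of convex analysis, $\vzero$ must lie in the relative boundary of $\overline{C}$. I would then carry out the classical supporting-hyperplane argument at $\vzero$: choose points $x_k\in\aff(\overline{C})\setminus\overline{C}$ with $x_k\to\vzero$, apply the projection argument of the previous case to each $x_k$ to obtain unit vectors $p_k$ satisfying $\IP{p_k}{v}\geq\IP{p_k}{x_k}$ for every $v\in\overline{C}$, and extract a convergent subsequence $p_k\to p$ of unit norm using compactness of the unit sphere in $\Reals^n$. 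Passing to the limit yields $\IP{p}{v}\geq 0$ for all $v\in\overline{C}\supseteq C$, as required.

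I expect the boundary subcase $\vzero\in\overline{C}\setminus C$ to be the main obstacle. Without closedness or nonempty interior of $C$, one cannot directly project onto $C$, so the separator must be produced as a limit of separators coming from nearby exterior points of $\overline{C}$; one must also verify that such exterior approximating points exist in $\aff(\overline{C})$, which is where the characterization of the relative interior enters. It is precisely the finite-dimensional compactness of the unit sphere that rescues the limit step, and this is what distinguishes the theorem from its more delicate infinite-dimensional analogues.
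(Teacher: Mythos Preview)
The paper does not prove this theorem; it merely records it as ``well-known'' and uses it as a black box in the proof of \cref{nsminimax}. There is therefore nothing to compare against.

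Your argument is the standard textbook proof and is correct. A couple of minor points you may wish to tighten if you write it out in full: (i) when you assert that $c\mapsto\|c\|^2$ attains its minimum on $\overline{C}$, note that $\overline{C}$ need not be compact, so you should intersect with a suitable closed ball to reduce to a compact set; (ii) the existence of the approximating points $x_k\in\aff(\overline{C})\setminus\overline{C}$ with $x_k\to\vzero$ follows because $\vzero$, lying in the relative boundary of $\overline{C}$, is a boundary point of $\overline{C}$ within the affine hull, and hence every relative neighborhood of $\vzero$ meets the complement $\aff(\overline{C})\setminus\overline{C}$. You have identified this step as the delicate one, and your invocation of $\ri(\overline{C})=\ri(C)$ together with compactness of the unit sphere handles it correctly.
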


An internal prior is an element of $\NSE{\ProbMeasures{\Theta}}$. 
\begin{definition}
Let $D$ be a subset of $\FRRE$. 
An internal prior $\Pi_0\in \NSE{\ProbMeasures{\Theta}}$ is said to be an $S$-least favorable prior with respect to $D$ if 
\[
\inf_{\delta\in D}\EST(\sRisk(\Pi_0,\sdelta))=\sup_{\Pi\in \NSE{\ProbMeasures{\Theta}}}\inf_{\delta\in D}\EST(\sRisk(\Pi,\sdelta))\in \Reals.
\]
An $S$-least favorable prior with respect to $\FRRE$ will simply
be referred to as an $S$-least favorable prior. 
\end{definition}

We now establish the main result of this section. 
Recall that, for every $D\subseteq \FRRE$, we use $\Ext{D}$
to denote the set $\{\NSE{\delta}: \delta\in D\}$. 
For an arbitrary set $A$, we use $\FiniteSubsets{A}$ to denote the collection of all finite subsets of $A$.

\begin{theorem}\label{nsminimax}
For any convex subset $\cC$ of $\FRRE$ the nonstandard minimax theorem holds:
\[
\inf_{\delta\in \cC}\sup_{\theta\in \Theta}\Risk(\theta,\delta)=\sup_{\Pi\in \NSE{\ProbMeasures{\Theta}}}\inf_{\delta\in \cC}\EST({\NSE{\Risk}(\Pi,\NSE{\delta})})\label{nsequality}.
\]
Moreover, there exists an $S$-least favorable prior $\Pi_0$ with respect to $\cC$, that is, the supremum on the right-hand side is realized or in other words, $\sup$ can be replaced by $\max$ on the right-hand side of the above equation.
\end{theorem}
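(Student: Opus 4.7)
The plan is to prove the nontrivial direction (the $\leq$ inequality) of the asserted equality and simultaneously extract an internal prior realizing the supremum, using a finite-dimensional hyperplane separation argument at each finite scale together with saturation to patch everything together.

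First, I would note that the inequality $\inf_{\delta \in \cC}\sup_\theta \Risk(\theta,\delta) \geq \sup_{\Pi}\inf_{\delta \in \cC}\EST(\NSE\Risk(\Pi,\NSE\delta))$ is immediate from \cref{minimaxgtr}. So it suffices to produce an internal prior $\Pi_0 \in \NSE{\ProbMeasures{\Theta}}$ such that $\inf_{\delta \in \cC}\EST(\NSE\Risk(\Pi_0,\NSE\delta)) \geq v$, where I abbreviate $v := \inf_{\delta \in \cC}\sup_\theta \Risk(\theta,\delta) \in \EReals$.

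The heart of the argument is the following standard-world claim: \emph{for every finite $D \subseteq \cC$ and every real $v' < v$, there exists a standard prior $\pi \in \ProbMeasures{\Theta}$ with finite support such that $\Risk(\pi,\delta) \geq v'$ for every $\delta \in D$.} To see this, fix such $D$ and $v'$. Since $\cC$ is convex, $\conv(D) \subseteq \cC$, and by definition of $v$ we have $\sup_\theta \Risk(\theta,\delta) > v'$ for every $\delta \in \conv(D)$. Applying \cref{l.Theta_0}, there is a finite $\Theta_0 \subseteq \Theta$ such that for each $\delta \in \conv(D)$ some $\theta \in \Theta_0$ satisfies $\Risk(\theta,\delta) > v'$. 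Now consider the two convex subsets of $\Reals^{\Theta_0}$
\[
S_1 = \bigl\{(\Risk(\theta,\delta))_{\theta \in \Theta_0} \st \delta \in \conv(D)\bigr\}, \qquad S_2 = \bigl\{r \in \Reals^{\Theta_0} \st r_\theta \leq v' \text{ for all } \theta \in \Theta_0\bigr\}.
\]
These are disjoint by construction. \cref{hyperplanethm} yields a nonzero $p \in \Reals^{\Theta_0}$ separating them; pushing $r_\theta \to -\infty$ coordinatewise in $S_2$ forces $p_\theta \geq 0$ for all $\theta$, and then $\sup_{v_2 \in S_2}\langle p,v_2\rangle = v'\sum_\theta p_\theta$. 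Normalizing $p$ to a probability vector gives the desired prior $\pi$ on $\Theta_0 \subseteq \Theta$.

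Next I would upgrade this via saturation. For each finite $D \subseteq \cC$ and each $v' \in \Reals$ with $v' < v$, define the internal set
\[
F_{D,v'} = \bigl\{\Pi \in \NSE{\ProbMeasures{\Theta}} \st \NSE\Risk(\Pi,\NSE\delta) \geq v' \text{ for all } \delta \in D\bigr\}.
\]
Each $F_{D,v'}$ is internal (it is cut out by a finite conjunction of internal conditions) and nonempty, because any finitely supported standard prior as produced above is in $F_{D,v'}$ via the star map. The family $\{F_{D,v'}\}$ has the finite intersection property since $F_{D_1 \cup \cdots \cup D_k,\, \max_i v'_i} \subseteq \bigcap_i F_{D_i,v'_i}$. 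By saturation, choose $\Pi_0 \in \bigcap_{D,v'} F_{D,v'}$. Then for every $\delta \in \cC$ and every real $v' < v$ we have $\NSE\Risk(\Pi_0,\NSE\delta) \geq v'$, which gives $\EST(\NSE\Risk(\Pi_0,\NSE\delta)) \geq v$ for every $\delta \in \cC$, and therefore $\inf_{\delta \in \cC}\EST(\NSE\Risk(\Pi_0,\NSE\delta)) \geq v$. Combined with \cref{minimaxgtr} this yields the equality \eqref{nsequality} and shows $\Pi_0$ realizes the supremum. The case $v = \infty$ is handled uniformly by letting $v'$ range over all of $\Reals$ in the saturation step, which forces $\NSE\Risk(\Pi_0,\NSE\delta)$ to be unlimited for every $\delta \in \cC$.

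The main obstacle is the hyperplane separation step, specifically arguing that the separating functional $p$ can be taken with nonnegative coordinates and normalizing it to a probability measure; the rest is a routine saturation patch-up. A secondary delicate point is the correct bookkeeping of the extended-real-valued standard part $\EST$ when $v = \infty$, which is why I would explicitly note that $\NSE\Risk(\Pi_0,\NSE\delta)$ being larger than every real forces $\EST(\NSE\Risk(\Pi_0,\NSE\delta)) = \infty$.
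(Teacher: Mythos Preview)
Your approach is essentially identical to the paper's: reduce to a finite $\Theta_0$ via \cref{l.Theta_0}, apply hyperplane separation in $\Reals^{\Theta_0}$ to extract a finitely supported prior, and then saturate over all finite $D\subseteq\cC$ and all real $v'<v$. Your packaging of the saturation step via the internal sets $F_{D,v'}$ is exactly equivalent to the paper's finitely satisfiable family of formulas $\phi_{v}^{D}$, and your treatment of the case $v=\infty$ and of the nonnegativity of the separating functional matches the paper's.

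There is one technical omission. Since the paper explicitly allows $\Risk(\theta,\delta)=\infty$, your set $S_1=\{(\Risk(\theta,\delta))_{\theta\in\Theta_0}:\delta\in\conv(D)\}$ need not lie in $\Reals^{\Theta_0}$: some of these vectors may have infinite coordinates, so \cref{hyperplanethm} does not directly apply. The paper handles this by replacing each risk with a regularized version $r^{v'}_\delta(\theta)$, equal to $\Risk(\theta,\delta)$ when finite and to $v'+1$ otherwise; the regularized vectors land in $\Reals^{\Theta_0}$ and are still disjoint from $S_2$, and since $\langle\pi, r^{v'}_\delta\rangle\leq \Risk(\pi,\delta)$ the desired bound $\Risk(\pi,\delta)\geq v'$ still follows after normalization. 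With this patch your argument goes through verbatim.
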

\begin{proof}
By \cref{minimaxgtr}, we have
\[
\inf_{\delta\in \cC}\sup_{\theta\in \Theta}\Risk(\theta,\delta)\geq\sup_{\Pi\in \NSE{\ProbMeasures{\Theta}}}\inf_{\delta\in \cC}\ST({\NSE{\Risk}(\Pi,\NSE{\delta})}).
\]
Thus, it is sufficient to show that
\[
\inf_{\delta\in \cC}\sup_{\theta\in \Theta}\Risk(\theta,\delta)\leq\sup_{\Pi\in \NSE{\ProbMeasures{\Theta}}}\inf_{\delta\in \cC}\ST({\NSE{\Risk}(\Pi,\NSE{\delta})}).
\]
Let 
\[
V:=\inf_{\delta\in \cC}\sup_{\theta\in \Theta}\Risk(\theta,\delta)
\]
where we explicitly allow $V = \infty$.
For every $v  \in \Reals$ such that $v < V$ and every $D \in \FiniteSubsets{\cC}$,
define 
$\phi_{v}^{D}(\pi)$ to be the formula
\[
\pi \in \ProbMeasures{\Theta} \;\land \;
\inf_{\delta \in D} \Risk(\pi, \delta) > v.
\]
We show that the collection of formulas 
\begin{equation}\label{e.coll}
\left\{ \phi_{v}^{D}(\pi) : v \in \Reals \land v < V \land  D \in \FiniteSubsets{\cC}\right\}
\end{equation}
is finitely satisfiable. Since this collection is closed under finite conjunctions, it is clear we only need to show that each formula in this collection is satisfiable.

Let therefore $v  \in \Reals$ such that $v < V$ and $D \in \FiniteSubsets{\cC}$ be given.
Since $D$ is finite, by \cref{l.Theta_0} we can find $\Theta_0 \in \FiniteSubsets{\Theta}$ such that
\begin{equation}\label{e.conv}
v < \inf_{\delta \in \conv(D)} \sup_{\theta \in\Theta_0} \Risk(\theta,\delta).
\end{equation}
Let $n$ be the cardinality of $\Theta_0$. Recall that $\Reals^{\Theta_0}$ denotes
the set of functions from $\Theta_0$ to $\Reals$, and thus $\Reals^{\Theta_0}$
is just an isomorphic copy of euclidean space $\Reals^n$.
Define
\[
Q_{v}=\{x \in \Reals^{\Theta_0} : (\forall \theta \in \Theta_0)\; x(\theta) \leq v\}.
\]
Clearly, $Q_{v}$ is a convex set. 

For any $\delta \in \cC$, define the $v$-regularized risk function $r^v_\delta$ as follows:
\[
r^v_\delta (\theta) = 
\begin{cases}
\Risk(\theta, \delta) &\text{ if $\Risk(\theta, \delta) < \infty$;}\\
v + 1 &\text{ otherwise.}
\end{cases}
\]
Clearly, $r^v_\delta \in \Reals^{\Theta}$ for every $\delta \in \cC$.
For any set $\cC_0 \subseteq \cC$, define the $v$-regularized risk set $\RS{v,\cC_0}$ induced by $\cC_0$ to be the set of regularized risk functions of procedures in $\cC_0$, restricted to $\Theta_0$; in other words,
\[
\RS{v,\cC_0}=\{ r^v_\delta \upharpoonright \Theta_0 : \delta \in \cC_0 \}.
\]
 Clearly, $\RS{v,\cC_0} \subseteq \Reals^{\Theta_0}$.
\begin{claim}\label{finitedisjoint}
$Q_{v}\cap \RS{v,\conv(D)}=\emptyset$. 
\end{claim}
\begin{proof}
The claim obvious from \eqref{e.conv} and the definitions.
\end{proof}

As both $Q_v$ and $\RS{v,\conv(D)}$ are convex sets, by \cref{hyperplanethm} and \cref{finitedisjoint}, there is a nontrivial hyperplane which separates them; that is, there is $p \in \Reals^{\Theta_0}$ such that for all $x \in Q_v$ and $d \in \RS{v,\cC}$, 
\begin{equation}\label{e.sep}
\langle p, x\rangle \leq \langle p, d \rangle.
\end{equation}
We will now produce a prior $\pi$ from $p$ in the usual manner. First, observe the following:
\begin{claim}
$p(\theta)\geq 0$ for all $\theta\in \Theta$.
\end{claim}
\begin{proof}
Suppose that the claim is false, i.e., $p(\theta_0)<0$ for some $\theta_{0}\in \Theta$.
By finding a point $x_0$ in $\bigcup_{n\in \Nats}Q_v$ whose value at $\theta_0$ is negative and arbitrarily small,
clearly $\IP{p}{x_0}$ can be made  arbitrary large, a contradiction as this quantity is bounded above by $\IP{p}{d_0}$.
\end{proof} 
By normalizing $p \in \Reals^{\Theta_0}$ we obtain $\pi \in \ProbMeasures{\Theta_0}$;
Since 
\[
\IP{p}{r^v_\delta} \leq \Risk(\pi,\delta)
\]
and since the maximum of the left-hand side of \eqref{e.sep} is $v$, we have found $\pi$ such that
$\phi_{v}^{D}(\pi)$ holds, and hence the collection from \eqref{e.coll} is finitely satisfiable.

For every $D \in \FiniteSubsets{\Ext{\cC}}$
and $v \in \Reals$ with $v < V$,
$\NSE{\phi_{n}^{D}(\Pi)}$ is the formula
\[
\Pi \in \NSE{\ProbMeasures{\Theta}} \;\land \;
\inf_{\delta \in D} \NSE{\Risk(\Pi, \NSE\delta)} > v.
\]
By the saturation principle,
there exists  $\Pi_{0}$ satisfying all such formulas $\NSE{\phi_{n}^{D}(\Pi)}$ simultaneously.
That is, for every $\delta \in \cC$  and every $v \in \Reals$ with $v < V$,
we have  
$\NSE{\Risk(\Pi_0, \NSE\delta)} > v$, 
and hence
\[
V \leq \inf_{\delta \in \cC} \EST \big(\NSE{\Risk(\Pi_0, \NSE\delta)}\big).
\]
Thus, $\Pi_0$ is an $S$-least favorable prior with respect to $\cC$, and $\Pi_0$ realizes the standard minimax value, completing the proof. 
\end{proof}

Within the standard statistical decision theory framework, it is possible for a decision problem to have infinite minimax value while having finite maximin value, as is shown in the following example. 

\begin{example}\label{densecounter}
Let $X=\{0\}$, $\Theta=\cA=\Reals$ and let the loss function
\[\Loss(\theta,a) = \left\{
  \begin{array}{lr}
    |\theta-a|, & |\theta-a|\in \Rationals\\
    0, & \text{otherwise}.
  \end{array}
\right.
\]
Let $\Theta$ be equipped with the usual Lebesgue $\sigma$-algebra. 
Let $\cC$ be the collection of all decision procedures $\delta$ such that $\delta(0)$ is a countably supported probability measure on $\cA$. 
Note that $\cC$ is a convex subset of $\FRRE$. 
For every $\delta\in \cC$, there exists $a_0\in \cA$ such that $\delta(0,\{a_0\})>0$.
As there exists arbitrarily large $\theta\in \Theta$ such that $|\theta-a_0|\in \Rationals$,
we have
\[
\sup_{\theta\in \Theta}\Risk(\theta,\delta)&=\sup_{\theta\in \Theta}\int \Loss(\theta,a)\delta(0,\dee a)\\
&\geq \sup_{\theta\in \Theta}|\theta-a_0|\delta(0,\{a_0\})=\infty.
\]
Hence, we have $\inf_{\delta\in \cC}\sup_{\theta\in \Theta}\Risk(\theta,\delta)=\infty$. 

We now show that the maximin value is finite. 
It is sufficient to show that, for every $\pi\in \PM{\Theta}$, there exists $\delta\in \cC$ such that $\Risk(\pi,\delta)=0$. 
Let $\pi\in \PM{\Theta}$ be given. For every $a\in \Reals$, define $[a]=\{r\in \Reals: |r-a|\in \Rationals\}$. 
It is easy to see that there exists an uncountable set $A\subset \Reals$ such that $\Reals=\bigcup_{a\in A}[a]$ and $[a]\cap [b]=\emptyset$ for distinct $a,b\in A$. 
As $\pi$ is a probability measure, there exists $a_1\in A$ such that $\pi([a_1])=0$. 
Let $\delta_1\in \FRRE$ be the non-randomized decision procedure such that $\delta_{1}(0,\{a_1\})=1$. 
Note that $\delta_{1}\in \cC$ and $\Risk(\theta,\delta_1)=\Loss(\theta,a_1)=0$ for every $\theta\not\in [a_1]$.  
We have
\[
\Risk(\pi,\delta_1)&=\int_{\Reals}\Risk(\theta,\delta_1)\pi(\dee \theta)\\
&=\int_{[a_1]}\Risk(\theta,\delta_1)\pi(\dee \theta)+\int_{\Reals\setminus [a_1]}\Risk(\theta,\delta_1)\pi(\dee \theta)=0.
\]

It is also interesting to see a $\NSE{}$prior which witnesses that the nonstandard minimax theorem holds. 
To construct such a prior explicitly, let $\Delta_0, \hdots, \Delta_K$ be a hyperfinite sequence of $\NSE{}$decision procedures 
such that $\Ext{\cD} \subseteq \{\Delta_0, \hdots, \Delta_K\}$.
For each $J \in \{0, \hdots, K\}$, find $\tilde \theta_J \in \NSE{\Theta}$ 
such that 
\[
\tilde w_J := \Delta_K(0,\{\tilde \theta_J\}) > 0.
\]
Find $N \in \NSE{\Nats}$ large enough so that
\[
N \cdot \left( \min_{J} \tilde w_J \right) \cdot \frac{1}{K+1}  \text{ is unlimited}
\]
and let $\Pi$ be the $\NSE{}$probability measure which gives equal mass to every point in 
\[
\{\bar \theta_J + N \colon J \in \NSE{\Nats}, 0 \leq J \leq K\}.
\]
Then, for each $J \in \{0, \hdots, K\}$ it holds that
\[
\NSE{\Risk}(\Pi,\Delta_J) \geq \Loss(\tilde\theta_J, \tilde \theta_J + N) \cdot \Delta_J(0,\{\tilde \theta_J\}) \cdot \Pi(\{\tilde \theta_J + N\})  = N \cdot \tilde w_J \cdot \frac{1}{K+1}
\]
and thus by choice of $N$, $\EST\big( \NSE{\Risk}(\Pi,\Delta_J)\big) = \infty$.
\end{example}

\subsection{Nonstandard Bayes Optimality of Minimax Procedures}
A decision procedure $\delta_0$ is said to be \emph{minimax} with respect to a collection $\cC\subset \FRRE$ of decision procedures if $\sup_{\theta\in \Theta}\Risk(\theta,\delta_0)=\inf_{\delta\in \cC}\sup_{\theta\in \Theta}\Risk(\theta,\delta)$. A natural question to ask is: when are minimax decision procedures Bayes with respect to some prior distributions? A simple answer to this question is: If the (standard) minimax theorem holds and there is a least favorable prior $\pi$, then minimax decision procedures are Bayes procedures with respect to $\pi$. However, for an arbitrary statistical decision problem, the minimax theorem may fail and least favorable prior need not necessarily exist. In this section, using \cref{nsminimax}, we show that every minimax decision procedure is nonstandard Bayes with respect to some nonstandard prior. 

The following definition of Bayes optimality is taken from \citet{DR2017}. 
\begin{definition}
Let $\cC\subset \FRRE$ and let $\Pi_0$ be an internal prior. 
A decision procedure $\delta_0$ is nonstandard Bayes under $\Pi_0$ with respect $\cC$ if $\sRisk(\Pi_0,\NSE{\delta_0})$ is hyperfinite and, 
for all $\delta\in \cC$, we have $\sRisk(\Pi_0,\NSE{\delta_0})\lessapprox \sRisk(\Pi_0,\NSE{\delta})$.
\end{definition}

The following theorem shows that every minimax decision procedure is nonstandard Bayes under the least favorable prior, provided that the minimax value is finite. 

\begin{theorem}\label{mininsbayes}
Let $\cC$ be a convex subset of $\FRRE$. 
Suppose $\inf_{\delta\in \FRRE}\sup_{\theta\in \Theta}\Risk(\theta,\delta)<\infty$.
Let $\delta_0$ be a minimax decision procedure with respect to $\cC$ and let $\Pi_0$ be an $S$-least favorable prior with respect to $\cC$.
Then $\delta_0$ is nonstandard Bayes under $\Pi_0$ with respect to $\cC$. 
\end{theorem}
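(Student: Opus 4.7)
The plan is to identify $\EST(\sRisk(\Pi_0,\NSE{\delta_0}))$ with the standard minimax value
$V_\cC \defas \inf_{\delta\in \cC}\sup_{\theta\in \Theta}\Risk(\theta,\delta)$
by a two-sided sandwich, and then to compare every other $\delta\in\cC$ against $\delta_0$ via $\Pi_0$ using that $\Pi_0$ realizes the nonstandard maximin value. Conceptually, everything is already present: \cref{nsminimax} provides the equality of upper and lower nonstandard values on the convex set $\cC$, the definition of $S$-least favorable gives realization of the supremum, and the definition of minimax for $\delta_0$ pins down the value $V_\cC$ exactly. I do not foresee a genuine obstacle; the only delicate point is handling those $\delta\in\cC$ whose nonstandard Bayes risk against $\Pi_0$ is unlimited, which will be disposed of using nonnegativity of the risk (since $\Loss$ is $\NNReals$-valued).

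First I would apply \cref{nsminimax} to $\cC$ and combine it with $S$-least favorability of $\Pi_0$ to obtain
\[
V_\cC = \sup_{\Pi\in \NSE{\ProbMeasures{\Theta}}}\inf_{\delta\in \cC}\EST(\sRisk(\Pi,\sdelta)) = \inf_{\delta\in \cC}\EST(\sRisk(\Pi_0,\sdelta)) \in \Reals,
\]
so in particular $V_\cC$ is finite. Since $\delta_0$ is minimax with respect to $\cC$, we have $\sup_{\theta\in \Theta}\Risk(\theta,\delta_0) = V_\cC$. Transferring the trivial inequality that a probability integral is bounded by the supremum of the integrand, and then noting that the supremum over $\NSE{\Theta}$ of $\NSE{\Risk}(\argdot,\NSE{\delta_0})$ coincides with $\sup_{\theta\in \Theta}\Risk(\theta,\delta_0)$ because $\delta_0$ is standard, yields
\[
\sRisk(\Pi_0,\NSE{\delta_0}) \;\leq\; \sup_{\theta\in \Theta}\Risk(\theta,\delta_0) \;=\; V_\cC,
\]
hence $\EST(\sRisk(\Pi_0,\NSE{\delta_0})) \leq V_\cC$.

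In the opposite direction, by the $S$-least favorable property,
\[
\EST(\sRisk(\Pi_0,\NSE{\delta_0})) \;\geq\; \inf_{\delta\in \cC}\EST(\sRisk(\Pi_0,\sdelta)) \;=\; V_\cC.
\]
Combining the two inequalities, $\sRisk(\Pi_0,\NSE{\delta_0}) \approx V_\cC \in \Reals$; in particular $\sRisk(\Pi_0,\NSE{\delta_0})$ is finite, and so hyperfinite by \cref{fanear}. This handles the first clause in the definition of nonstandard Bayes.

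For the second clause, fix an arbitrary $\delta\in \cC$. If $\sRisk(\Pi_0,\NSE{\delta})$ is unlimited, then since risk is nonnegative by the definition of $\Loss$, it must be positive unlimited, while $\sRisk(\Pi_0,\NSE{\delta_0})$ is limited, so $\sRisk(\Pi_0,\NSE{\delta_0})\lessapprox \sRisk(\Pi_0,\NSE{\delta})$ holds trivially. If instead $\sRisk(\Pi_0,\NSE{\delta})$ is limited, then $\EST$ coincides with $\ST$ on it, and the $S$-least favorable property gives
\[
\ST(\sRisk(\Pi_0,\NSE{\delta})) \;\geq\; V_\cC \;=\; \ST(\sRisk(\Pi_0,\NSE{\delta_0})),
\]
from which $\sRisk(\Pi_0,\NSE{\delta_0})\lessapprox \sRisk(\Pi_0,\NSE{\delta})$ follows at once. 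This completes the proof plan.
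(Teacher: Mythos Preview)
Your proof is correct and follows essentially the same approach as the paper's: both combine \cref{nsminimax} with the definitions of minimax and $S$-least favorable to sandwich $\ST(\sRisk(\Pi_0,\NSE{\delta_0}))$ at the value $V_\cC$, using transfer for the upper bound $\sRisk(\Pi_0,\NSE{\delta_0})\le \sup_{\theta}\Risk(\theta,\delta_0)$. The paper simply concludes ``$\delta_0$ is nonstandard Bayes'' from the equality $\ST(\sRisk(\Pi_0,\NSE{\delta_0})) = \inf_{\delta\in \cC}\EST(\sRisk(\Pi_0,\sdelta))$, whereas you spell out the finiteness clause and the unlimited-versus-limited case distinction for arbitrary $\delta\in\cC$; this is additional care, not a different argument.
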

\begin{proof} 
By assumption, we have $\inf_{\delta\in \cC}\sup_{\theta\in \Theta}\Risk(\theta,\delta)=\sup_{\theta\in \Theta}\Risk(\theta,\delta_0) < \infty$ and 
\[
\sup_{\Pi\in \NSE{\PM{\Theta}}}\inf_{\delta\in \cC}\EST(\sRisk(\Pi,\NSE{\delta}))=\inf_{\delta\in \cC}\EST(\sRisk(\Pi_0,\sdelta)). 
\]
From these equalities and \cref{nsminimax}, we infer $\sup_{\theta\in \Theta}\Risk(\theta,\delta_0)=\inf_{\delta\in \cC}\EST(\sRisk(\Pi_0,\sdelta))$. 
By the transfer principle, $\sup_{\theta\in \Theta}\Risk(\theta,\delta_0)=\sup_{\tilde \theta\in \NSE{\Theta}}\sRisk(\tilde \theta,\NSE{\delta_0})\geq \ST(\sRisk(\Pi_0,\NSE{\delta_0}))$. 
Thus, we have 
\[
\ST(\sRisk(\Pi_0,\NSE{\delta_0})) = \inf_{\delta\in \cC}\EST(\sRisk(\Pi_0,\sdelta)), 
\]
showing that $\delta_0$ is nonstandard Bayes under $\Pi_0$ with respect to $\cC$. 
\end{proof} 

We find it interesting to note the following equivalence:
\begin{lemma}\label{hyriskfinite}
Let $D$ be a subset of $\FRRE$.
Then $\inf_{\delta\in D}\sup_{\theta\in \Theta}\Risk(\theta,\delta)<\infty$ if and only if there is $\delta_1\in D$ such that $\NSE{\Risk}(\tilde\theta,\NSE{\delta_1})\in \NS{\NSE{\Reals}}$ for all $\tilde\theta\in \NSE{\Theta}$. 
\end{lemma}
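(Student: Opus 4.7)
For the ($\Rightarrow$) direction, I plan to exploit the fact that transfer converts a standard upper bound on $\Risk(\argdot,\delta_1)$ into a bound on $\NSE{\Risk}(\argdot,\NSE{\delta_1})$ holding uniformly on $\NSE{\Theta}$. Since $\inf_{\delta \in D}\sup_{\theta\in\Theta}\Risk(\theta,\delta)<\infty$, I can pick $\delta_1 \in D$ with $M := \sup_{\theta \in \Theta}\Risk(\theta,\delta_1) < \infty$. Applying transfer to $(\forall\theta\in\Theta)\,\Risk(\theta,\delta_1)\le M$ yields $\NSE{\Risk}(\tilde\theta,\NSE{\delta_1})\le M$ for every $\tilde\theta\in\NSE{\Theta}$. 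Since $\Loss$ takes values in $\NNReals$, the risk is automatically non-negative, so $0\le\NSE{\Risk}(\tilde\theta,\NSE{\delta_1})\le M$, and \cref{fanear} delivers $\NSE{\Risk}(\tilde\theta,\NSE{\delta_1})\in\NS{\NSE{\Reals}}$.

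For ($\Leftarrow$), suppose some $\delta_1\in D$ satisfies $\NSE{\Risk}(\tilde\theta,\NSE{\delta_1})\in \NS{\NSE{\Reals}}$ for every $\tilde\theta\in\NSE{\Theta}$; it is enough to show $\sup_{\theta\in\Theta}\Risk(\theta,\delta_1)<\infty$, since this forces the infimum over $D$ to be finite as well. Arguing by contradiction, assume this supremum is infinite. Then for each $n\in\Nats$ some standard $\theta$ satisfies $\Risk(\theta,\delta_1)>n$, so by transfer each internal set $A_n:=\{\tilde\theta\in\NSE{\Theta}:\NSE{\Risk}(\tilde\theta,\NSE{\delta_1})>n\}$ is non-empty. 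The family $\{A_n\}_{n\in\Nats}$ is decreasing and hence has the finite intersection property, so saturation provides some $\tilde\theta\in\bigcap_{n\in\Nats} A_n$, whose risk exceeds every standard $n$ and is therefore unlimited; by \cref{fanear} such a value is not near-standard, contradicting the hypothesis.

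I do not expect any real obstacle. The only subtlety is that $\Risk$ is extended-real-valued, so in principle $\NSE{\Risk}(\tilde\theta,\NSE{\delta_1})$ could equal $\pm\infty\in\NSE{\EReals}$; in the forward direction the bound $M$ together with non-negativity rules this out, while in the reverse direction any such infinite value, like any unlimited hyperreal, lies outside $\NS{\NSE{\Reals}}$ and still delivers the contradiction.
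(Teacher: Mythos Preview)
Your proposal is correct and follows essentially the same route as the paper: transfer of a standard uniform bound for the forward direction, and a saturation/overspill argument producing a $\tilde\theta$ with unlimited risk for the reverse direction. The only cosmetic differences are that the paper bounds by $v+\epsilon$ rather than your $M=\sup_\theta\Risk(\theta,\delta_1)$ directly, and that the paper invokes overspill by name where you spell out the decreasing family $\{A_n\}$ and apply saturation explicitly.
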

\begin{proof}
First, suppose $\inf_{\delta\in D}\sup_{\theta\in \Theta}\Risk(\theta,\delta)=v\in \Reals$. 
Let $\epsilon\in \PosReals$ be given. 
Find $\delta_1\in D$ such that $\sup_{\theta\in \Theta}\Risk(\theta,\delta_1)<v+\epsilon$. 
Thus, we have the following sentence:
\[
(\forall \theta\in \Theta)(\Risk(\theta,\delta_1)<v+\epsilon).
\]
By the transfer principle, we have
\[
(\forall \theta\in \NSE{\Theta})(\NSE{\Risk}(\theta,\NSE{\delta_1})<v+\epsilon).
\]
By \cref{fanear}, we have $\NSE{\Risk}(\theta,\NSE{\delta_1})\in \NS{\NSE{\Reals}}$ for all $\theta\in \NSE{\Theta}$. 

For the other direction, let us suppose that $\delta_1\in D$ is such that $\NSE{\Risk}(\theta,\NSE{\delta_1})\in \NS{\NSE{\Reals}}$ for all $\theta\in \NSE{\Theta}$. 
Then there must be $M \in \Reals$ such that for all $\theta \in \Theta$, $\Risk(\theta,\delta_1) \leq M$; for otherwise, 
by overspill---or equivalently, by $\sigma$-saturatedness---there must be $\tilde \theta \in \NSE{\Theta}$ such that
$\NSE{\Risk}(\tilde \theta, \NSE{\delta_1})$ is infinite, which stands in contradiction to our assumption.
Thus, the risk function of $\delta_1$ is totally bounded by $M$, and $\inf_{\delta\in D}\sup_{\theta\in \Theta}\Risk(\theta,\delta) \leq M$.
\end{proof}

\section{Push-down Results for Standard Statistical Decision Problems}\label{pushdownsec}
As we can see from \cref{secnsmini}, the nonstandard minimax theorem (\cref{nsminimax}) establishes the equivalence between the minimax and maximin values, provided that one is willing to adopt nonstandard priors. 
In this section, we will derive several standard minimax theorems as corollaries to \cref{nsminimax}. 

Throughout the present section, the parameter space $\Theta$ is equipped with the $\sigma$-algebra $\BorelSets{\Theta}$ of Borel subsets of $\Theta$.
The idea is to ``push down" an $S$-least favorable prior $\Pi\in \NSE{\ProbMeasures{\TTheta}}$ to obtain a standard (finitely-additive) prior $\pi$ on $(\Theta,\BorelSets {\Theta})$.  
Under moderate regularity conditions, we establish strong connections between the internal risk of $\NSE{\delta}$ with respect to $\Pi$ and the risk of $\delta$ with respect to $\pi$, for every decision procedure $\delta\in \FRRE$.

Although \cref{nsminimax} applies to every convex sub-collection of decision procedures, we focus on $\FRRE$, the set of all randomized decision procedures in this section. Every result in this section in fact holds for arbitrary convex subsets of $\FRRE$. 

\subsection{A Minimax Theorem on Compact Parameter Space}
A natural generalization of the minimax theorem with finite parameter space (\cref{minimaxthm}) is to the case where the parameter space $\Theta$ is compact. Every internal probability measure $\Pi$ on $(\NSE{\Theta},\NSE{\BorelSets {\Theta}})$ is associated with a standard countably additive prior $\pi$ in this case. 
In fact, using \cref{nsminimax}, we establish a standard minimax theorem for statistical decision problems with compact parameter spaces and upper semi-continuous risk functions. 

We start by quoting the following technical result from nonstandard analysis. 

\begin{lemma}[{\citep[][Thm.~13.4.1]{NDV}}]\label{pushdown}
Let $\gY$ be a compact Hausdorff space equipped with Borel $\sigma$-algebra $\BorelSets {\gY}$,
let $\nu$ be an internal probability measure defined on $(\NSE{\gY}, \NSE{\BorelSets{\gY}})$,
and let $\cC=\{C\subset \gY: \ST^{-1}(C) \in \overline{\NSE{\BorelSets{\gY}}}\}$.
Define a probability measure $\pd{\nu}$ on the sets $\cC$ by $\pd{\nu}(C)=\Loeb{\nu}(\ST^{-1}(C))$. Then $(\gY,\cC,\pd{\nu})$ is the completion of a regular Borel probability space.
\end{lemma}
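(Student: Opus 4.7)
The plan is to verify the four assertions wrapped up in the statement: $\mathcal{C}$ is a $\sigma$-algebra on $Y$, $\pd{\nu}$ is a well-defined countably additive probability measure, the Borel $\sigma$-algebra is contained in $\mathcal{C}$, and the resulting measure is regular and is in fact the completion of its restriction to $\BorelSets{Y}$. The crucial fact making everything work is that $Y$ is compact Hausdorff, so every $y \in \NSE{Y}$ is near-standard and $\ST \colon \NSE{Y} \to Y$ is a well-defined function with $\ST^{-1}(Y) = \NSE{Y}$. The Loeb construction already supplies a countably additive probability space $(\NSE{Y},\overline{\NSE{\BorelSets{Y}}},\overline{\nu})$, so I would treat the proof as transporting measurability and regularity along $\ST$.

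First I would dispense with the bookkeeping. Since $\ST$ is a genuine function defined on all of $\NSE{Y}$, $\ST^{-1}$ commutes with complements, countable unions, and countable intersections; hence $\mathcal{C}$ inherits the $\sigma$-algebra structure from $\overline{\NSE{\BorelSets{Y}}}$. Likewise, $\pd{\nu}$ is well-defined and countably additive because $\overline{\nu}$ is, and $\pd{\nu}(Y) = \overline{\nu}(\NSE{Y}) = 1$.

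The main obstacle, and the part I would spend the most care on, is showing $\BorelSets{Y} \subseteq \mathcal{C}$. It suffices to handle closed $F \subseteq Y$. By regularity of $Y$ (a compact Hausdorff space is normal),
\[
\ST^{-1}(F) \;=\; \bigcap\bigl\{\NSE{U} \st U \text{ open},\ F \subseteq U\bigr\},
\]
and each $\NSE{U}$ is internal and lies in $\NSE{\BorelSets{Y}}$. This intersection may be uncountable, but using countable additivity of $\overline{\nu}$ I can choose a decreasing sequence $U_{1}\supseteq U_{2}\supseteq \cdots$ of open neighborhoods of $F$ with $\nu(\NSE{U_{n}}) \downarrow \inf_{U \supseteq F}\nu(\NSE U)$, giving an outer approximation $A_{o}=\NSE{U_{n}}\in \NSE{\BorelSets{Y}}$. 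For an inner approximation I apply the same argument to complements: pick an increasing sequence of open sets $V_{n} \subseteq Y\setminus F$ whose internal extensions $\NSE{V_{n}}$ cover as much of $\NSE{Y}\setminus \ST^{-1}(F)$ as possible, and let $A_{i}=\NSE{Y}\setminus \NSE{V_{n}}$. By construction $A_{i}\subseteq \ST^{-1}(F)\subseteq A_{o}$ and $\nu(A_{o}\setminus A_{i})$ can be made smaller than any prescribed $\epsilon$, which is exactly the membership criterion for $\overline{\NSE{\BorelSets{Y}}}$.

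Having established $\BorelSets{Y}\subseteq \mathcal{C}$, inner regularity of $\pd{\nu}\upharpoonright \BorelSets{Y}$ follows by pushing down inner approximations from the Loeb side: given $A\in \BorelSets{Y}$ and internal $B\subseteq \ST^{-1}(A)$ with $\nu(B)$ close to $\pd{\nu}(A)$, the standard part $K := \ST(B)$ is compact (as the continuous image of a set whose closure in $\NSE Y$ is internal, mapped into the Hausdorff space $Y$), satisfies $K\subseteq A$, and obeys $B\subseteq \ST^{-1}(K)$, so $\pd{\nu}(K)\geq \ST(\nu(B))$. Outer regularity follows by complementation. Finally, since $\overline{\nu}$ is a complete measure, any $\pd{\nu}$-null set $N\subseteq Y$ has $\ST^{-1}(N)\in \overline{\NSE{\BorelSets{Y}}}$ of Loeb measure zero, and any $N'\subseteq N$ then has $\ST^{-1}(N')\subseteq \ST^{-1}(N)$ Loeb measurable, placing $N'$ in $\mathcal{C}$. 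Thus $\mathcal{C}$ contains all $\pd{\nu}\upharpoonright \BorelSets Y$-null sets and their subsets; combined with $\BorelSets{Y}\subseteq \mathcal{C}$, this identifies $(\gY,\mathcal{C},\pd{\nu})$ as the completion of the regular Borel probability space $(\gY,\BorelSets{Y},\pd{\nu}\upharpoonright \BorelSets Y)$.
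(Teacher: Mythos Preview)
The paper does not prove this lemma; it is quoted from \citep{NDV} without argument, so there is nothing to compare your proposal against. Your outline follows the standard route and most steps are sound, but the inner-approximation step in showing $\BorelSets{\gY}\subseteq\mathcal{C}$ has a genuine gap. Your proposed $A_i=\NSE{\gY}\setminus\NSE{V_n}=\NSE{(\gY\setminus V_n)}$ need not be contained in $\ST^{-1}(F)$: the set $\gY\setminus V_n$ is a closed \emph{superset} of $F$, so $y\in\NSE{(\gY\setminus V_n)}$ only forces $\ST(y)\in \gY\setminus V_n$, not $\ST(y)\in F$. Concretely, take $\gY=[0,1]$, $F=\{1/2\}$, and let $\nu$ be the internal Dirac mass at $1/2+\epsilon$ for a positive infinitesimal $\epsilon$; then $\ST^{-1}(F)$ is the monad of $1/2$ and carries full Loeb measure, yet every $\NSE{(\gY\setminus V_n)}$ is a standard-width neighbourhood of $1/2$ containing many points whose standard part is not $1/2$, and $\nu(\NSE F)=0$, so neither $\NSE F$ nor your $A_i$ serves as an inner approximation.

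The missing ingredient is saturation (which the paper explicitly assumes in the needed strength). Since the family $\{\NSE U:U\text{ open},\,F\subseteq U\}$ is closed under finite intersections and each member has $\nu$-measure at least $\gamma=\inf_U\ST(\nu(\NSE U))$, for any real $\epsilon>0$ the internal conditions ``$B\subseteq\NSE U$'' (one for each such $U$) together with ``$\nu(B)>\gamma-\epsilon$'' are finitely satisfiable by taking $B=\NSE{(U_1\cap\cdots\cap U_k)}$; saturation then produces an internal $B\subseteq\bigcap_U\NSE U=\ST^{-1}(F)$ with $\nu(B)>\gamma-\epsilon$, which is the inner set you need. Two smaller points: the reason $\ST(B)$ is closed for internal $B$ in your regularity step is again saturation, not any continuity of $\ST$; and to conclude that $\mathcal C$ \emph{equals} the completion of $\BorelSets{\gY}$ rather than merely contains it, you should note that your inner-regularity argument applies to every $C\in\mathcal C$, so each such $C$ differs from a Borel set by a $\pd{\nu}$-null set.
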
 

\begin{definition}\label{defpushdown}
The measure $\pd{\nu} : \cC \to [0,1]$ in \cref{pushdown} is called the \defn{external push-down} of the internal probability measure $\nu$.
\end{definition}

The following theorem establishes the strong connection between an internal probability measure and its external push-down. 

\begin{theorem}[{\citep[][Lemma.~2]{nsweak}}]\label{pdintcts}
Let $Y$ be a compact Hausdorff space equipped with the Borel $\sigma$-algebra $\BorelSets {Y}$,
let $\nu$ be an internal probability measure on $(\NSE{Y},\NSE{\BorelSets {Y}})$,
let $\pd{\nu}$ be the external push-down of $\nu$, and let $f: Y\to \Reals$ be a bounded measurable function.
Define $g: \NSE{Y}\to \Reals$ by $g(s)=f(\ST(s))$. Then we have $\int f \dee \pd{\nu}=\int g \dee \Loeb{\nu}$. 
\end{theorem}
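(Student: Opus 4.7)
The plan is to prove the identity by the standard measure-theoretic ladder: verify it first for indicator functions, extend by linearity to simple functions, and then pass to bounded measurable functions by a uniform approximation argument.

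First I would handle the case $f = \Ind_C$ for a Borel set $C \subseteq Y$. Since $g(s) = \Ind_C(\ST(s)) = \Ind_{\ST^{-1}(C)}(s)$, the claim reduces to
\[
\int \Ind_C \, \dee \pd{\nu} = \pd{\nu}(C) = \Loeb{\nu}(\ST^{-1}(C)) = \int \Ind_{\ST^{-1}(C)} \, \dee \Loeb{\nu},
\]
where the middle equality is just the definition of $\pd{\nu}$ from \cref{defpushdown}. Before doing so I need to confirm that Borel sets $C \subseteq Y$ actually lie in the domain $\cC$ of $\pd{\nu}$: this follows because \cref{pushdown} states that $(Y, \cC, \pd{\nu})$ is the completion of a regular Borel probability space, so $\cC$ contains $\BorelSets{Y}$. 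This step also supplies the $\Loeb{\nu}$-measurability of $g$ for any Borel $f$, since preimages of Borel sets in $\Reals$ under $g$ are of the form $\ST^{-1}(B)$ with $B$ Borel, hence lie in $\overline{\NSE{\BorelSets{Y}}}$ by definition of $\cC$.

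Next, by linearity of both integrals, the identity extends from indicators to all simple Borel functions on $Y$. Then for an arbitrary bounded Borel $f \colon Y \to \Reals$ with $|f| \leq M$, I would choose a sequence $(f_n)$ of simple Borel functions converging uniformly to $f$ with $|f_n| \leq M$. The induced functions $g_n(s) = f_n(\ST(s))$ then converge uniformly on $\NSE{Y}$ to $g$, and since both $\pd{\nu}$ and $\Loeb{\nu}$ are probability measures, bounded convergence (or simply the uniform bound combined with $\int |g_n - g| \, \dee\Loeb{\nu} \leq \sup_s |g_n(s) - g(s)|$) gives
\[
\int f \, \dee \pd{\nu} = \lim_{n \to \infty} \int f_n \, \dee \pd{\nu} = \lim_{n \to \infty} \int g_n \, \dee \Loeb{\nu} = \int g \, \dee \Loeb{\nu}.
\]

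The one substantive point of caution, rather than a genuine obstacle, is the measurability check for $g$. The rest of the argument is essentially the standard passage from indicators to bounded measurable functions, with no delicate convergence issues since everything is uniformly bounded and the measures are finite.
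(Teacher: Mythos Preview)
The paper does not supply its own proof of this theorem; it is simply quoted from \citep[][Lemma~2]{nsweak} and used as a black box. So there is no in-paper argument to compare against.

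Your argument is correct. In fact it is exactly the standard proof of the change-of-variables formula for a pushforward measure: by \cref{defpushdown}, $\pd{\nu}$ is precisely the pushforward of the Loeb measure $\Loeb{\nu}$ along the standard-part map $\ST$, so the identity $\int f\,\dee\pd{\nu} = \int (f\circ\ST)\,\dee\Loeb{\nu}$ is the abstract change-of-variables identity, and your indicator $\to$ simple $\to$ bounded measurable ladder is the usual way to justify it. The two points that need care are exactly the ones you flagged---that Borel sets lie in $\cC$ (ensured by \cref{pushdown}), and that $g$ is $\Loeb{\nu}$-measurable (since $g^{-1}(B)=\ST^{-1}\bigl(f^{-1}(B)\bigr)$ with $f^{-1}(B)\in\BorelSets{Y}\subseteq\cC$, hence $\ST^{-1}\bigl(f^{-1}(B)\bigr)\in\overline{\NSE{\BorelSets{Y}}}$). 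Once those are in hand the uniform-approximation step is routine because both $\pd{\nu}$ and $\Loeb{\nu}$ are probability measures.
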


We are interested in statistical decision problems with upper semi-continuous risk functions. 
Recall the following nonstandard characterization of upper semi-continuous functions. 

\begin{lemma}\label{nsupper}
Let $(Y,d)$ be a metric space. 
A function $f: Y\to \Reals$ is an upper semi-continuous function if and only if $\NSE{f}(y)\lessapprox f(\ST(y))$ for every $y\in \NS{\NSE{Y}}$. 
\end{lemma}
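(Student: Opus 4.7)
The plan is to prove both directions by unwinding the $\epsilon$-$\delta$ definition of upper semi-continuity, using the transfer principle in one direction and saturation (specifically, the existence of points in the monad) in the other.

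For the forward implication, I will assume $f$ is upper semi-continuous and fix $y \in \NS{\NSE{Y}}$ with standard part $x = \ST(y)$. Given a standard $\epsilon > 0$, the upper semi-continuity of $f$ at $x$ yields a neighborhood $U$ of $x$ such that $f(z) < f(x) + \epsilon$ for every $z \in U$. Since $y$ lies in the monad of $x$, we have $y \in \NSE{U}$, and transferring the sentence $(\forall z \in U)(f(z) < f(x) + \epsilon)$ gives $\NSE{f}(y) < f(x) + \epsilon = f(\ST(y)) + \epsilon$. As $\epsilon > 0$ was an arbitrary standard real, we conclude $\NSE{f}(y) \lessapprox f(\ST(y))$.

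For the reverse implication, I will argue by contraposition. Suppose $f$ fails to be upper semi-continuous at some $x \in Y$. Then there is a standard $\epsilon > 0$ such that for every $n \in \Nats$ the open ball $B(x, 1/n)$ contains a point $z_n$ with $f(z_n) \geq f(x) + \epsilon$. Transferring this and applying overspill (or directly applying saturation to the family of internal sets $\{z \in \NSE{Y} : \NSE{d}(z,x) < 1/n \text{ and } \NSE{f}(z) \geq f(x) + \epsilon\}$ indexed by $n \in \Nats$, which has the finite intersection property), we obtain $y \in \NSE{Y}$ with $\NSE{d}(y,x) \approx 0$ and $\NSE{f}(y) \geq f(x) + \epsilon$. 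Then $y$ is near-standard with $\ST(y) = x$, but $\NSE{f}(y) \geq f(\ST(y)) + \epsilon$, contradicting the hypothesis $\NSE{f}(y) \lessapprox f(\ST(y))$.

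Neither direction presents a substantial obstacle: the forward direction is an essentially immediate application of transfer to the neighborhood definition of upper semi-continuity, while the reverse direction only requires the standard saturation argument producing a monad point that witnesses the failure of the $\epsilon$-condition. The slight subtlety worth attending to is that $\NSE{f}(y)$ could \emph{a priori} be infinite in $\NSE{\Reals}$, but in the reverse argument the inequality $\NSE{f}(y) \geq f(x) + \epsilon$ is all that is needed to contradict $\lessapprox$, and in the forward argument the bound $\NSE{f}(y) < f(x) + \epsilon$ automatically places $\NSE{f}(y)$ in $\NS{\HReals}$.
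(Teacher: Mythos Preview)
Your proposal is correct and follows essentially the same route as the paper: both directions unwind the $\epsilon$-$\delta$ definition, using transfer into the monad for the forward implication and a contrapositive argument (producing a monad point violating the bound) for the reverse. The only cosmetic differences are that the paper works with metric balls rather than general neighborhoods and, in the reverse direction, applies transfer directly to the sentence $(\forall r>0)(\exists a)(d(a,y_0)<r \wedge f(y_0)<f(a)-\epsilon)$ at an infinitesimal radius instead of invoking saturation; neither changes the substance of the argument.
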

\begin{proof}
Suppose that $f$ is upper semi-continuous. 
Pick an element $y\in \NS{\NSE{Y}}$ and a positive $\epsilon\in \Reals$. 
Let $y_0=\ST(y)$. 
There exists a positive $r\in \Reals$ such that
\[
f(y_0)\geq \sup_{a\in Y}\{f(a): d(a,y_0)<r\}-\epsilon
\]
By the transfer principle, we have $\NSE{f}(y_0)\geq \sup_{a\in \NSE{Y}}\{\NSE{f}(a): \NSE{d}(a,y_0)<r\}-\epsilon$. 
As $y_0\approx y$, we have $\NSE{f}(y_0)\geq \NSE{f}(y)-\epsilon$.
As our choice of $\epsilon$ is arbitrary, we have $f(y_0)\gtrapprox \NSE{f}(y)$. 

Suppose that $f$ is not upper semi-continuous.
Then there exists $y_0\in Y$ such that $f$ is not upper semi-continuous at $y_0$. 
Thus, there exists a positive $\epsilon\in \Reals$ such that the following first-order statement is true:
\[
(\forall r>0)(\exists a\in Y)(d(a,y_0)<r\wedge f(y_0)<f(a)-\epsilon)
\]
By the transfer principle, there exists a $y\in \NSE{Y}$ such that $y\approx y_0$ and $f(y_0)<\NSE{f}(y)-\epsilon$, completing the proof. 
\end{proof}

We also need the following definition from nonstandard integration theory. 

\begin{definition}\label{sintegral}
Let $(\Omega,\Gamma,P)$ be an internal probability space, 
let $F: \Omega\to \NSE{\Reals}$ be an internally integrable function such that $\ST \circ F$ is defined $\Loeb{P}$-almost surely. 
The function $F$ is $S$-integrable with respect to $P$ if $\ST(\int |F| \dee P)$ exists and
\[
\ST(\int |F| \dee P)=\lim_{n\to \infty} \ST(\int |F_n| \dee P),
\]
where for $n\in \Nats$, $F_n=\min\{F,n\}$ when $F\geq 0$ and $F_n=\max\{F,-n\}$ when $F\leq 0$.
\end{definition}

The following theorem is an important consequence of $S$-integrability.

\begin{theorem}[{\citep[][Thm.~4.6.2]{NSAA97}}]\label{intheory}
Let $(\Omega,\Gamma,P)$ be an internal probability space, 
let $F: \Omega\to \NSE{\Reals}$ be an internally integrable function such that $\ST(F)$ exists $\Loeb{P}$-almost surely. 
The function $F$ is $S$-integrable with respect to $P$ if and only if $\ST{F}$ is $\Loeb{P}$-integrable and $\int F \dee P\approx \int \ST(F) \dee \Loeb{P}$. 
\end{theorem}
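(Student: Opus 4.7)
The plan is to prove both implications of the equivalence, reducing to the case $F \geq 0$ by splitting $F = F^+ - F^-$ (both of which are internally integrable because $F$ is) and recombining. I will treat the nonnegative case as the main content and handle the general case by subtraction; the monotone structure of the truncations $F_n = \min\{F,n\}$ is what makes the nonnegative case especially clean.

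For the forward direction, assume $F \geq 0$ is $S$-integrable. For each standard $n \in \Nats$, the truncation $F_n$ is internally bounded by $n$, so $F_n$ is near-standard everywhere on $\NSE\Omega$, and $\ST(F_n) = \min\{\ST F, n\}$ holds $\Loeb P$-a.s.\ on the full-measure set where $\ST F$ exists. For bounded internal functions, the relation
\[
\int F_n \dee P \approx \int \ST(F_n) \dee \Loeb P
\]
is a classical consequence of Anderson's Luzin theorem (equivalently, one approximates $F_n$ by internal hyperfinite simple functions and invokes bounded convergence on the Loeb space). The pointwise monotone convergence $\ST(F_n)\uparrow \ST(F)$ $\Loeb P$-a.s.\ together with the standard monotone convergence theorem gives $\int \ST(F_n)\dee\Loeb P \to \int \ST(F)\dee\Loeb P$, while the hypothesis of $S$-integrability gives $\ST\!\left(\int F_n \dee P\right) \to \ST\!\left(\int F \dee P\right) < \infty$. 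Chaining these limits yields both the $\Loeb P$-integrability of $\ST F$ and $\int F \dee P \approx \int \ST F \dee \Loeb P$.

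For the reverse direction, the key auxiliary tool is a \emph{nonstandard Fatou inequality}: for every nonnegative internally integrable $G$ with $\ST G$ defined $\Loeb P$-a.s.,
\[
\int \ST G \dee \Loeb P \;\leq\; \EST\!\left(\int G \dee P\right).
\]
This is obtained by applying the forward direction to the bounded truncations $G_m$, which trivially are $S$-integrable, and then letting $m\to\infty$ using monotone convergence on the left and $\int G_m \dee P \leq \int G \dee P$ on the right. Applying this inequality to $F^+$ and $F^-$ separately, and combining with the assumed equality $\int F \dee P \approx \int \ST F \dee \Loeb P$ together with $\Loeb P$-integrability of $\ST F^\pm = (\ST F)^\pm$, the two Fatou inequalities must collapse to equalities; this excludes the possibility that either $F^+$ or $F^-$ places non-infinitesimal mass on the unlimited part of $\NSE\Reals$. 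From this the convergence $\ST\!\left(\int |F_n|\dee P\right)\to \ST\!\left(\int |F|\dee P\right)$ defining $S$-integrability follows by one more truncation argument.

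The main obstacle is the reverse direction and, more specifically, the bookkeeping around the decomposition $F = F^+ - F^-$: the internal integrals $\int F^+\dee P$ and $\int F^-\dee P$ can each be unlimited hyperreals even when their hyperreal difference $\int F\dee P$ is finite, and the hypothesis on $\int F \dee P$ alone does not obviously prevent the two pieces from carrying cancelling masses escaping to infinity. The sharpness step — using the integrability of $\ST F$ to force both Fatou inequalities to equalities simultaneously — is the delicate part, and it is here that one may need to restrict to $F \geq 0$ (or equivalently state the hypothesis in terms of $|F|$) to make the argument go through cleanly.
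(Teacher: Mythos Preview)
The paper does not give its own proof of this statement: it is quoted verbatim as \citep[][Thm.~4.6.2]{NSAA97} and used as a black box (only in the proof of \cref{pdintupper}). There is therefore nothing in the paper to compare your argument against.

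That said, your outline for the forward direction is the standard one and is correct: boundedness of $F_n$ gives $\int F_n\,\dee P \approx \int \ST(F_n)\,\dee\Loeb P$ by the bounded Loeb integration theorem, and monotone convergence together with the defining limit for $S$-integrability finishes it.

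Your caution about the reverse direction is well placed, and in fact the difficulty you flag is real rather than merely bookkeeping. Consider $\Omega=\{1,\dots,N\}$ with uniform internal probability and $N$ unlimited; set $F(1)=N$, $F(2)=-N$, and $F=0$ elsewhere. Then $\ST F=0$ $\Loeb P$-a.s., $\int F\,\dee P=0\approx 0=\int \ST F\,\dee\Loeb P$, yet $\int|F|\,\dee P=2$ while $\ST\bigl(\int|F_n|\,\dee P\bigr)=\ST(2n/N)=0$ for every standard $n$, so $F$ is not $S$-integrable in the sense of \cref{sintegral}. Thus the biconditional, read literally for signed $F$ with the hypothesis placed on $\int F\,\dee P$ rather than on $\int|F|\,\dee P$, fails; your Fatou-collapse argument cannot close because the two Fatou inequalities for $F^{\pm}$ can be strict while their difference is still an equality. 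The usual textbook formulation (and the only version the paper actually needs, since \cref{pdintupper} works with $|\NSE f|$) either assumes $F\ge 0$ or states the hypothesis with $|F|$; your instinct in the final paragraph to restrict to that case is exactly the right fix.
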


With these prerequisites at our disposal, we are able to prove the following key lemma. 

\begin{lemma}\label{pdintupper}
Let $Y$ be a compact Hausdorff space equipped with Borel $\sigma$-algebra $\BorelSets {Y}$,
let $\nu$ be an internal probability measure on $(\NSE{Y},\NSE{\BorelSets {Y}})$,
let $\pd{\nu}$ be the external push-down of $\nu$, 
and let $f: Y\to \Reals$ be an upper semi-continuous function such that $\NSE{f}$ is $S$-integrable with respect to $\nu$. 
Then we have $\int |f| \dee \pd{\nu}\gtrapprox \int |\NSE{f}| \dee \nu$.
\end{lemma}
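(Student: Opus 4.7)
The plan is to chain three links: (i) convert the internal integral of $|\NSE{f}|$ into a Loeb integral of its standard part via $S$-integrability and \cref{intheory}; (ii) dominate this pointwise $\Loeb{\nu}$-a.s.\ by $|f|\circ\ST$ via the USC hypothesis and \cref{nsupper}; and (iii) recognize the resulting Loeb integral as $\int|f|\,\dee\pd{\nu}$ via the description of $\pd{\nu}$ as a push-forward of $\Loeb{\nu}$ under $\ST$, invoking \cref{pdintcts}.

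For (i), $S$-integrability of $\NSE{f}$ immediately gives $S$-integrability of $|\NSE{f}|$ (the defining condition in \cref{sintegral} is already phrased in terms of $|F|$), and then \cref{intheory} yields $\int|\NSE{f}|\,\dee\nu \approx \int \ST(|\NSE{f}|)\,\dee\Loeb{\nu}$. For (ii), compactness of the Hausdorff space $Y$ forces every $y\in\NSE{Y}$ to be near-standard, so $\ST\colon\NSE{Y}\to Y$ is defined everywhere; \cref{nsupper} applied to the USC $f$ (taken non-negative, as in the intended application to risk functions valued in $\NNReals$, where $|f|=f$) then gives $\ST(|\NSE{f}|(y))\leq|f|(\ST(y))$ $\Loeb{\nu}$-almost surely, and integrating produces $\int\ST(|\NSE{f}|)\,\dee\Loeb{\nu} \leq \int(|f|\circ\ST)\,\dee\Loeb{\nu}$. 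For (iii), \cref{pushdown,defpushdown} identify $\pd{\nu}$ as the push-forward of $\Loeb{\nu}$ under $\ST$; a non-negative USC function on a compact space is bounded, so \cref{pdintcts} applies directly to give $\int(|f|\circ\ST)\,\dee\Loeb{\nu} = \int|f|\,\dee\pd{\nu}$. Concatenating the three relations delivers the claimed inequality.

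The subtlest step is the pointwise USC estimate in (ii): \cref{nsupper} supplies only a one-sided bound on $\NSE{f}$ itself, and this transfers to the bound we need for $|\NSE{f}|$ only in the non-negative regime implicit in the intended risk-function application. If $f$ were allowed to take negative values, a very negative value of $\NSE{f}(y)$ could make $|\NSE{f}(y)|$ far larger than $|f(\ST(y))|$, and the inequality would not survive the absolute values; in the risk-function setting where this lemma is ultimately applied, non-negativity sidesteps the issue. All other steps are straightforward invocations of the cited results.
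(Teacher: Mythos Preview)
Your argument is correct and in fact cleaner than the paper's, but it follows a genuinely different route at step~(iii). The paper does not apply \cref{pdintcts} to $|f|$ directly; instead it truncates at level $n\in\Nats$, applies \cref{pdintcts} to the bounded function $f_n$, uses \cref{nsupper} and \cref{intheory} on the truncation, and then invokes the definition of $S$-integrability to make $\int|(\NSE{f})_n|\,\dee\nu$ within $\epsilon$ of $\int|\NSE{f}|\,\dee\nu$ for large $n$; the monotonicity $\int|f|\,\dee\pd\nu\geq\int|f_n|\,\dee\pd\nu$ then closes the chain. Your observation that a non-negative upper semi-continuous function on a compact space is automatically bounded (above by compactness, below by non-negativity) lets you bypass truncation entirely and apply \cref{intheory} and \cref{pdintcts} in a single pass. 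This is genuinely simpler; once one has accepted the non-negativity restriction, the paper's detour through $f_n$ buys nothing additional.

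You are also right to flag the sign issue, and it is worth noting that the paper's own proof shares it: the step $\int|g|\,\dee\Loeb{\nu}\geq\int|\ST(\NSE{f}_n)|\,\dee\Loeb{\nu}$ there relies on the one-sided USC bound $g\geq\ST(\NSE{f}_n)$, which does not by itself compare absolute values when $f$ may be negative. In the paper's sole application (to risk functions $\Risk(\argdot,\delta)\geq 0$) this is harmless, so your explicit restriction to the non-negative case is appropriate rather than a weakening.
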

\begin{proof}
Pick a positive $\epsilon\in \Reals$. 
As $\NSE{f}$ is $S$-integrable with respect to $\nu$, there exists $n\in \Nats$ such that 
$\int |\NSE{f}| \dee \nu-\int |(\NSE{f})_n| \dee \nu<\epsilon$. 
Note that $(\NSE{f})_n=\NSE{(f_n)}$. 
Define $g_n: \NSE{Y}\to \Reals$ by $g(s)=f_{n}(\ST(s))$.
By \cref{pdintcts}, we have $\int |f_{n}| \dee \pd{\nu}=\int |g|\dee \Loeb{\nu}$. 
By \cref{nsupper}, we have $g(s)\gtrapprox (\NSE{f})_{n}(s)$ for all $s\in \NSE{Y}$. 
By \cref{intheory}, we have 
\[
\int |g|\dee \Loeb{\nu}\geq \int |\ST(\NSE{f}_{n})| \dee \Loeb{\nu}\approx \int |(\NSE{f})_n| \dee \nu.
\]
Thus, we have $\int |f| \dee \pd{\nu}\geq \int |f_{n}| \dee \pd{\nu}\gtrapprox \int |\NSE{f}| \dee \nu-\epsilon$. 
As $\epsilon$ is arbitrary, we have the desired result.  
\end{proof}

In order to establish a standard minimax theorem for statistical decision problems with compact parameter space, we impose the following condition on risk functions. 

\begin{condition}{RC}[risk upper semi-continuity]
\label{assumptionrc}
For every $\delta\in \FRRE$, the risk function $\Risk(\argdot,\delta)$ is upper semi-continuous on $\Theta$.
\end{condition} 

We now prove the following standard minimax theorem on statistical decision problems with compact parameter space and upper semi-continuous risk functions. 
Our theorem is a slight generalization of a general minimax theorem by \citet{sion}.  

\begin{theorem}[{\citep[][Thm.~4.2]{sion}}]\label{extpdminimax}
Suppose $\inf_{\delta\in \FRRE}\sup_{\theta\in \Theta}\Risk(\theta,\delta)<\infty$, the parameter space $\Theta$ is compact and \cref{assumptionrc} holds. Then we have 
\[
\inf_{\delta\in \FRRE}\sup_{\theta\in \Theta}\Risk(\theta,\delta)=\sup_{\pi\in {\PM{\Theta}}}\inf_{\delta\in \FRRE}{\Risk(\pi,\delta)},
\]
and there exists a least favorable prior $\pi_0$.
\end{theorem}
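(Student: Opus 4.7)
The plan is to invoke the nonstandard minimax theorem to obtain an internal $S$-least favorable prior, push it down to a standard countably additive Borel prior on $\Theta$ using compactness, and exploit upper semi-continuity of the risk functions to carry the least-favorability property through the pushdown.

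Concretely, let $V := \inf_{\delta\in \FRRE}\sup_{\theta\in \Theta}\Risk(\theta,\delta)$, which is finite by hypothesis. First apply \cref{nsminimax} with $\cC = \FRRE$ to obtain an internal prior $\Pi_0 \in \NSE{\ProbMeasures{\Theta}}$ satisfying
\[
\inf_{\delta \in \FRRE} \EST\bigl(\NSE{\Risk}(\Pi_0, \NSE{\delta})\bigr) = V.
\]
Because $\Theta$ is compact Hausdorff, \cref{pushdown} yields a regular Borel probability measure $\pi_0 := \pd{\Pi_0}$ on $\Theta$; this $\pi_0$ will be the candidate least favorable prior.

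The core estimate is $\Risk(\pi_0, \delta) \geq V$ for every $\delta \in \FRRE$. For each $\delta$ with $\sup_{\theta}\Risk(\theta,\delta) < \infty$, the risk function $\Risk(\argdot,\delta)$ is nonnegative, bounded, real-valued, and upper semi-continuous by \cref{assumptionrc}, so its nonstandard extension is bounded in $\HReals$ and hence trivially $S$-integrable with respect to $\Pi_0$. Applying \cref{pdintupper} gives
\[
\Risk(\pi_0, \delta) = \int \Risk(\theta, \delta)\,\pi_0(\dee \theta) \gtrapprox \int \NSE{\Risk}(\theta, \NSE{\delta})\,\Pi_0(\dee \theta) = \NSE{\Risk}(\Pi_0, \NSE{\delta}),
\]
so taking standard parts yields $\Risk(\pi_0, \delta) \geq \EST(\NSE{\Risk}(\Pi_0, \NSE{\delta})) \geq V$ by the choice of $\Pi_0$. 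For $\delta$ whose risk may attain $+\infty$, I would apply the same pushdown inequality to the bounded truncations $f_N(\theta) := \min(\Risk(\theta,\delta), N)$, each USC and bounded, and pass to the limit $N \to \infty$ using the monotone convergence theorem on $\pi_0$ together with an internal monotone-convergence argument on $\Loeb{\Pi_0}$; the characterization in \cref{nsupper} ensures that any near-standard $\tilde\theta$ at which $\NSE{\Risk}(\tilde\theta,\NSE{\delta})$ is unlimited must project to a point in $\{\theta : \Risk(\theta,\delta) = \infty\}$, so the case $\Risk(\pi_0,\delta) = \infty$ is immediate and the case $\Risk(\pi_0,\delta) < \infty$ reduces to the bounded-risk analysis via truncation.

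Combined with the routine weak-duality bound $\sup_{\pi}\inf_{\delta}\Risk(\pi,\delta) \leq V$, the previous step delivers the minimax equality and witnesses $\pi_0$ as a least favorable prior. The main obstacle is the mismatch between the possibly extended-real-valued risk functions permitted by \cref{assumptionrc} and the real-valuedness hypothesis in \cref{pdintupper}: handling $\delta$ whose risk takes the value $+\infty$ requires the truncation-and-passage-to-the-limit argument sketched above, and one must verify that $S$-integrability survives the truncation so that \cref{pdintupper} may be applied uniformly to each $f_N$.
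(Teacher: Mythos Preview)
Your proposal is correct and follows essentially the same route as the paper: invoke \cref{nsminimax} to obtain an $S$-least favorable internal prior, push it down via \cref{pushdown}, and apply \cref{pdintupper} together with weak duality to conclude. The paper simply applies \cref{pdintupper} directly to each risk function without the truncation argument you sketch for extended-real-valued risk, so your treatment of that corner case is in fact more careful than the paper's own proof.
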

\begin{proof} 
Note that we always have $\sup_{\pi\in {\PM{\Theta}}}\inf_{\delta\in \FRRE}{\Risk(\pi,\delta)}\leq \inf_{\delta\in \FRRE}\sup_{\theta\in \Theta}\Risk(\theta,\delta)$. 
As $\inf_{\delta\in \FRRE}\sup_{\theta\in \Theta}\Risk(\theta,\delta)<\infty$, by \cref{nsminimax}, there exists an $S$-least favorable prior $\mu\in \NSE{\PM{\TTheta}}$ such that
\[
\inf_{\delta\in \FRRE}\ST(\sRisk(\mu,\sdelta))=
\sup_{\Pi\in \NSE{\PM{\TTheta}}}\inf_{\delta\in \FRRE}\ST(\sRisk(\Pi,\sdelta))
=\inf_{\delta\in \FRRE}\sup_{\theta\in \Theta}\Risk(\theta,\delta).
\]
For every $\Pi\in \NSE{\PM{\TTheta}}$, by \cref{pushdown} and the fact that $X$ is compact, $\pd{\Pi}$ is an element of $\PM{\Theta}$. 
By \cref{pdintupper}, we have $\sRisk(\mu,\sdelta)\lessapprox \Risk(\pd{\mu},\delta)$ for  every $\delta\in \FRRE$. 
Thus, we have $\inf_{\delta\in \FRRE}\ST(\sRisk(\mu,\sdelta))\leq \inf_{\delta\in \FRRE}\Risk(\pd{\mu},\delta)\leq \inf_{\delta\in \FRRE}\sup_{\theta\in \Theta}\Risk(\theta,\delta)$.
Hence we have
\[
\inf_{\delta\in \FRRE}\Risk(\pd{\mu},\delta)=\inf_{\delta\in \FRRE}\sup_{\theta\in \Theta}\Risk(\theta,\delta)=\sup_{\pi\in {\PM{\Theta}}}\inf_{\delta\in \FRRE}{\Risk(\pi,\delta)}.
\]
Thus, $\pd{\mu}$ is a least favorable prior, completing the proof. 
\end{proof}

As the standard minimax equality holds and a least favorable prior exists, we immediately obtain the following result:
\begin{theorem}
Suppose $\inf_{\delta\in \FRRE}\sup_{\theta\in \Theta}\Risk(\theta,\delta)<\infty$, the parameter space $\Theta$ is compact and \cref{assumptionrc} holds. Then there exists a least favorable prior $\pi_0$. Moreover, if $\delta_0$ is a minimax decision procedure, then $\delta_0$ is Bayes with respect to $\pi_0$. 
\end{theorem}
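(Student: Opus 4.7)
The plan is to obtain both assertions essentially for free from \cref{extpdminimax}, which under these same hypotheses already delivers the standard minimax equality together with a least favorable prior $\pi_0 = \pd{\mu}$. So the first statement of the theorem is merely a restatement of the conclusion of \cref{extpdminimax} and requires no further argument.

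For the second assertion, suppose $\delta_0$ is a minimax decision procedure and let $\pi_0$ be the least favorable prior provided by \cref{extpdminimax}. I would chain the following inequalities. Since averaging is bounded by the supremum,
\[
\Risk(\pi_0,\delta_0) = \int_\Theta \Risk(\theta,\delta_0)\,\pi_0(\dee\theta) \leq \sup_{\theta\in\Theta}\Risk(\theta,\delta_0).
\]
Because $\delta_0$ is minimax, the right-hand side equals $\inf_{\delta\in \FRRE}\sup_{\theta\in\Theta}\Risk(\theta,\delta)$, which by the minimax equality from \cref{extpdminimax} equals $\sup_{\pi\in\PM{\Theta}}\inf_{\delta\in\FRRE}\Risk(\pi,\delta) = \inf_{\delta\in\FRRE}\Risk(\pi_0,\delta)$, where the last equality uses that $\pi_0$ is least favorable (\cref{lstfav}). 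Combining these gives
\[
\Risk(\pi_0,\delta_0) \leq \inf_{\delta\in \FRRE}\Risk(\pi_0,\delta),
\]
and the reverse inequality is trivial, so $\delta_0$ achieves the Bayes risk under $\pi_0$, i.e.\ $\delta_0$ is Bayes with respect to $\pi_0$.

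There is no real obstacle here: the heavy lifting (the push-down argument producing a countably additive least favorable prior, and the standard minimax equality) was already carried out in \cref{extpdminimax}. The only subtlety to flag is that the argument relies on the risk integral being well-defined, which is guaranteed by the assumption $\inf_{\delta\in\FRRE}\sup_{\theta\in\Theta}\Risk(\theta,\delta)<\infty$ together with the fact that $\delta_0$, being minimax, has a bounded risk function on $\Theta$; hence $\Risk(\pi_0,\delta_0)$ is a finite real number and all manipulations above are valid.
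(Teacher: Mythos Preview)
Your proposal is correct and matches the paper's approach exactly: the paper does not even write out a proof for this theorem, simply noting that it follows immediately from \cref{extpdminimax} since the standard minimax equality holds and a least favorable prior exists. Your spelled-out chain of inequalities is precisely the routine verification the paper leaves implicit.
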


Recall \cref{canexample} from \cref{secpre}. As the parameter space $\Theta=\{\frac{1}{n}: n\in \Nats\}$ is discrete, \cref{assumptionrc} holds automatically. Hence, \cref{extpdminimax} fails for statistical decision problems with continuous risk functions but non-compact parameter space.

\subsection{Finitely additive Minimax Theorem}
As we can see from \cref{canexample}, for statistical decision problems with non-compact parameter space, 
the minimax equality can fail unless one enlarges the set of prior probability distributions. As the literature stands, these natural relaxations include generalized priors, sequences of priors and finitely additive priors. In this section, assuming risk functions are bounded, we give a nonstandard proof of a finitely additive minimax theorem, originally due to \citet{oddsmaking}. 

For non-compact Borel measurable spaces, the external push-down of an internal probability measure may fail to be a probability measure. For example, the external push-down of the internal probability measure concentrating on some infinite number, will assign zero mass to every subset of $\Reals$. 
To overcome this problem, we now describe an alternative way to push down internal probability measures.  

\begin{definition} \label{intpushdown}
Let $X$ be a set and $\nu$ be an internal probability measure on $(\NSE{X},\NSE{\PowerSet(X)})$ where $\PowerSet(X)$ denotes the power set of $X$. The \emph{internal push-down} of $\nu$ is the function $\ipd{\nu}: \PowerSet(X)\mapsto [0,1]$ defined by $\ipd{\nu}(A)=\ST({\nu(\NSE{A})})$.
\end{definition}

The following lemma follows immediately from \cref{intpushdown}.

\begin{lemma} \label{pushdownlemma}
Let $\nu$ be an internal probability measure on $(\NSE{X},\NSE{\PowerSet(X)})$. Then its internal push-down $\ipd{\nu}$ is a finitely additive probability measure on $(X,\PowerSet(X))$.
\end{lemma}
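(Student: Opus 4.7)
The plan is to verify the three defining properties of a finitely additive probability measure directly from the definition $\ipd{\nu}(A) = \ST(\nu(\NSE{A}))$, using only transfer and elementary properties of the standard part map.

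First I would check that $\ipd{\nu}$ is well-defined and takes values in $[0,1]$. Since $\nu \in \NSE{\FM{X}}$, by transfer every value $\nu(B)$ for $B \in \NSE{\PowerSet(X)}$ lies in $\NSE{[0,1]}$, hence is finite, hence near-standard by \cref{fanear}, so $\ST(\nu(\NSE{A}))$ exists and lies in $[0,1]$ for every $A \in \PowerSet(X)$.

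Next I would check the normalization conditions. Since $\NSE{\emptyset} = \emptyset$, transfer of $\nu(\emptyset)=0$ gives $\ipd{\nu}(\emptyset) = \ST(0) = 0$. Similarly transfer of $\nu(\NSE{X}) = 1$ (which comes from the internal probability axiom $\nu(\NSE{X}) = 1$) yields $\ipd{\nu}(X) = 1$.

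Finally I would verify finite additivity. Given disjoint $A, B \in \PowerSet(X)$, transfer of the first-order statements $A \cap B = \emptyset$ and $A\cup B = A \cup B$ give $\NSE{A}\cap\NSE{B} = \emptyset$ and $\NSE{A \cup B} = \NSE{A} \cup \NSE{B}$. Since $\nu$ is internally (hyperfinitely, hence in particular finitely) additive by transfer,
\[
\nu(\NSE{A \cup B}) = \nu(\NSE{A}) + \nu(\NSE{B}).
\]
Applying $\ST$ to both sides and using that the standard part map is additive on finite hyperreals yields
\[
\ipd{\nu}(A \cup B) = \ipd{\nu}(A) + \ipd{\nu}(B).
\]
There is no real obstacle here; the lemma is a direct consequence of transfer together with the fact that $\nu$ takes only finite (indeed, near-standard) values, so the proof amounts essentially to unwinding definitions.
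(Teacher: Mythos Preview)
Your proof is correct and is precisely the kind of direct verification the paper has in mind: the paper does not give a written proof at all, stating only that the lemma ``follows immediately from \cref{intpushdown}''. Your unwinding of the definition via transfer and the additivity of $\ST$ on finite hyperreals is exactly that immediate verification.
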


We abuse the notation to use $\FM{\Theta}$ to denote the collection of finitely additive probability measures on $(X,\PowerSet(X))$. 
We quote the following theorem which establishes a close connection between an internal probability measure and its internal push down. 
For completeness, we include the proof in the paper. 

\begin{theorem}[{\citep[][Thm.~5.4]{faprob}}]\label{intpushdowneq}
Let $\nu$ be an internal probability measure on $(\NSE{X},\NSE{\PowerSet(X)})$ and let $f: X\mapsto \Reals$ be a bounded function. Then we have
\[
\int_{\NSE{X}} \NSE{f}(x) \nu(\dee x)\approx \int_{X} f(x) \ipd{\nu}(\dee x).
\]
\end{theorem}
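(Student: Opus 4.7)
The plan is to prove the equality modulo an infinitesimal by uniform approximation of the bounded function $f$ by simple functions, exploiting the defining property $\ipd\nu(A) = \ST(\nu(\NSE A))$. Since $f$ is bounded, fix $M\in \Reals$ with $|f|\leq M$. Given a positive $\epsilon \in \Reals$, partition $[-M,M]$ into finitely many disjoint intervals $I_1,\ldots,I_n$ of length less than $\epsilon$, set $A_i = f^{-1}(I_i) \subseteq X$, choose representatives $c_i \in I_i$, and form the simple function $g = \sum_{i=1}^{n} c_i \mathbf{1}_{A_i}$, so that $|f - g| < \epsilon$ uniformly on $X$. The integral $\int f \dee \ipd\nu$ against the finitely additive probability $\ipd\nu$ is defined for bounded $f$ as the common limit of such simple approximants and satisfies the usual uniform-error estimates.

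Next I would apply the transfer principle to deduce $\NSE g = \sum_{i=1}^n c_i \mathbf{1}_{\NSE{A_i}}$ and $|\NSE f - \NSE g| < \epsilon$ uniformly on $\NSE X$. Since $\nu$ is an internal probability measure, this yields
\[
\left| \int_{\NSE X} \NSE f \dee \nu - \sum_{i=1}^n c_i \, \nu(\NSE{A_i})\right| \leq \epsilon,
\]
and analogously for the push-down side,
\[
\left| \int_{X} f \dee \ipd\nu - \sum_{i=1}^n c_i \, \ipd\nu(A_i)\right| \leq \epsilon.
\]

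Then I would invoke the very definition of $\ipd\nu$ to conclude $\nu(\NSE{A_i}) \approx \ipd\nu(A_i)$ for each $i$; since $n$ is finite (standard) and each $|c_i|\leq M$, the two finite sums $\sum_i c_i \nu(\NSE{A_i})$ and $\sum_i c_i \ipd\nu(A_i)$ differ only by an infinitesimal $\eta$. Combining the three estimates gives
\[
\left|\int_{\NSE X} \NSE f \dee \nu - \int_{X} f \dee \ipd\nu\right| < 2\epsilon + |\eta|,
\]
and since $\epsilon \in \PosReals$ was arbitrary, the left-hand side is infinitesimal, which is precisely the claim.

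The only mildly delicate point is ensuring that the finitely additive integral $\int f \dee \ipd\nu$ is well-defined and compatible with the uniform-approximation bound used above; for bounded functions on $(X,\PowerSet(X))$ this is a standard construction and causes no real obstacle. The crux of the argument is thus the reduction to finitely many indicator-function identities, where the defining property of $\ipd\nu$ applies directly.
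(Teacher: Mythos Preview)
Your proof is correct and follows essentially the same route as the paper's: uniform approximation of $f$ by a simple function via a finite partition of its range, transfer to bound the $\NSE f$ side, and the defining relation $\ipd\nu(A)=\ST(\nu(\NSE A))$ to match the finite sums up to an infinitesimal. The only cosmetic difference is that the paper selects representative points $x_i\in F_i$ and uses $f(x_i)$ as the simple-function values, whereas you pick $c_i\in I_i$ directly; the triangle-inequality structure and the $2\epsilon$ estimate are identical.
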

\begin{proof}
Fix $\epsilon\in \PosReals$. Let $\{K_1,K_2,\dotsc,K_n\}$ be a partition of a large enough interval of $\Reals$ containing the range of $f$ such that every $K_i\in \{K_1,K_2,\dotsc,K_n\}$ is an interval with diameters no greater than $\epsilon$. For $i\leq n$, let $F_i=f^{-1}(K_i)$. Then $\{F_1,\dotsc,F_n\}\subset \BorelSets X$ is a partition of $X$ such that $|f(x)-f(x')|<\epsilon$ for every $x,x'\in F_i$ for every $i=1,2,\dotsc,n$.
Pick $x_{i}\in F_i$ for every $i=1,2,\dotsc,n$.
Define $g: X\mapsto \Reals$ by letting $g(x)=f(x_i)$ if $x\in F_i$ for every $i=1,2,\dotsc,n$.
Then $g$ is a simple bounded real valued function on $X$.
Thus, both $g$ and $f$ are integrable with respect to $\ipd{\nu}$.

We now have
$|\ \int_{\NSE{X}} \NSE{f}(x) \nu(\dee x)-\int_{X} f(x) \ipd{\nu}(\dee x)\ |\leq |\int \NSE{f}(x) \nu(\dee x)- \int \NSE{g}(x) \nu(\dee x)|
+|\int \NSE{g}(x) \nu(\dee x)-\int g(x) \ipd{\nu}(\dee x)|+|\int g(x) \ipd{\nu}(\dee x)-\int f(x) \ipd{\nu}(\dee x)|$
where all internal integrals are over $\NSE{X}$ and all standard integrals are over $X$.

By the transfer principle, we have $|\NSE{f}(x)-\NSE{f}(x')|<\epsilon$ for every $x,x'\in \NSE{F_i}$ for every $i=1,2,\dotsc,n$. Thus, we have $|\NSE{f}(x)-\NSE{g}(x)|<\epsilon$ for all $x\in \NSE{X}$.
Thus, we have $|\int_{\NSE{X}} \NSE{f}(x) \nu(\dee x)- \int_{\NSE{X}} \NSE{g}(x) \nu(\dee x)|\leq \int_{\NSE{X}}|\NSE{f}(x)-\NSE{g}(x)|\nu(\dee x)<\epsilon$. Similarly, we have $|\int_{X} g(x) \ipd{\nu}(\dee x)-\int_{X} f(x) \ipd{\nu}(\dee x)|<\epsilon$.  We now consider the term $|\int_{\NSE{X}} \NSE{g}(x) \nu(\dee x)-\int_{X} g(x) \ipd{\nu}(\dee x)|$:
\[
\int_{\NSE{X}} \NSE{g}(x) \nu(\dee x)&=\sum_{i=1}^{n}\int_{\NSE{F_i}} \NSE{g}(x) \nu(\dee x)\\
&=\sum_{i=1}^{n}\NSE{f}(x_i)\nu(\NSE{F_i})
=\sum_{i=1}^{n}f(x_i)\nu(\NSE{F_i})
\approx \sum_{i=1}^{n}f(x_i)\ipd{\nu}(F_i)\\
&=\sum_{i=1}^{n}\int_{F_i} g(x) \ipd{\nu}(\dee x)=\int_{X} g(x) \ipd{\nu}(\dee x)
\]
Thus, we have $|\int_{\NSE{X}} \NSE{f}(x) \nu(\dee x)-\int_{X} f(x) \ipd{\nu}(\dee x)|\lessapprox 2\epsilon$. As $\epsilon$ is arbitrary, we have $\int_{\NSE{X}} \NSE{f}(x) \nu(\dee x)\approx \int_{X} f(x) \ipd{\nu}(\dee x)$.
\end{proof}

We impose the following condition on the risk functions.

\begin{condition}{RB}[risk boundedness]\label{assumptionrb}
For every $\delta\in \FRRE$,
$\Risk(\argdot,\delta)$ is a bounded function on $\Theta$.
\end{condition}

The following result is a direct consequence of \cref{intpushdowneq}.

\begin{theorem}\label{pushdowneq}
Suppose \cref{assumptionrb} holds. 
Let $\Pi$ be an internal prior on $T_{\Theta}$ and let $\ipd{\Pi}$ be the internal push-down of $\Pi$.
Let $\delta_0 \in \FRRE$ be a standard decision procedure.
Then $\Risk(\argdot,\delta_{0})$ is $\ipd{\Pi}$-integrable and $\Risk(\ipd{\Pi},\delta_0)\approx \sRisk(\Pi,\sdelta_0)$,
i.e., the Bayes risk under $\ipd{\Pi}$ of $\delta_{0}$ is within an infinitesimal of the nonstandard Bayes risk under $\Pi$ of $\sdelta_{0}$.
\end{theorem}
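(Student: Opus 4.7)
The plan is to apply \cref{intpushdowneq} directly, with the bounded function taken to be $f := \Risk(\argdot,\delta_0)\colon \Theta \to \Reals$ and the internal probability measure taken to be $\Pi$. The only hypothesis of \cref{intpushdowneq} is boundedness of $f$, which is precisely what \cref{assumptionrb} guarantees for any $\delta \in \FRRE$. After this application, I would unpack the notation: the transfer principle gives $\NSE{f}(\theta) = \NSE{\Risk}(\theta,\NSE{\delta_0}) = \sRisk(\theta,\sdelta_0)$ for all $\theta \in \NSE{\Theta}$, so the internal integral $\int_{\NSE{\Theta}} \NSE{f}\,\dee \Pi$ is exactly $\sRisk(\Pi,\sdelta_0)$, while the standard integral $\int_\Theta f\,\dee \ipd{\Pi}$ is exactly $\Risk(\ipd{\Pi},\delta_0)$ by definition of the Bayes risk. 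The conclusion $\Risk(\ipd{\Pi},\delta_0) \approx \sRisk(\Pi,\sdelta_0)$ then drops out of \cref{intpushdowneq}.

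For the auxiliary claim that $\Risk(\argdot,\delta_0)$ is $\ipd{\Pi}$-integrable, I would invoke \cref{pushdownlemma}, which tells us $\ipd{\Pi}$ is a finitely additive probability measure on the full power set $\PowerSet(\Theta)$; against such a measure, every bounded real-valued function on $\Theta$ is trivially integrable, via uniform approximation by simple functions (the same range-partition trick carried out in the proof of \cref{intpushdowneq}). I do not anticipate any real obstacle here; the theorem is essentially \cref{intpushdowneq} rephrased in decision-theoretic terms, with \cref{assumptionrb} supplying the boundedness that \cref{intpushdowneq} requires, so no genuinely new ideas are needed.
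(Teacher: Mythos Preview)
Your proposal is correct and is exactly the paper's approach: the paper states this result as ``a direct consequence of \cref{intpushdowneq}'' without further elaboration, and you have supplied precisely the details that make that claim work. There is nothing to add.
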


We extend the meaning of being \emph{least favorable} to finitely additive priors in 
the natural manner.
\begin{definition}
A finitely additive prior $\pi_0$ is said to be least favorable if
\[
\inf_{\delta\in \FRRE}\Risk(\pi_0,\delta)=\sup_{\pi\in {\FM{\Theta}}}\inf_{\delta\in \FRRE}{\Risk(\pi,\delta)}.
\]
\end{definition}
 
We now give a proof to the following result which is originally due to Sudderth and Heath. It is an direct consequence of \cref{nsminimax,pushdowneq}.

\begin{theorem}[{\citep[][Thm.~2]{oddsmaking}}]\label{intpdminimax}
Suppose \cref{assumptionrb} holds. Then we have 
\[
\inf_{\delta\in \FRRE}\sup_{\theta\in \Theta}\Risk(\theta,\delta)=\sup_{\pi\in {\FM{\Theta}}}\inf_{\delta\in \FRRE}{\Risk(\pi,\delta)},
\]
and there exists a least favorable finitely additive prior $\pi_0$.
\end{theorem}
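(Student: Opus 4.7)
The plan is to combine \cref{nsminimax} with the internal push-down machinery of \cref{pushdownlemma,pushdowneq}. First I would record the trivial maximin--minimax inequality $\sup_{\pi\in \FM{\Theta}}\inf_{\delta\in \FRRE}\Risk(\pi,\delta)\leq \inf_{\delta\in \FRRE}\sup_{\theta\in \Theta}\Risk(\theta,\delta)$, which holds for any family of priors and follows from swapping the order of $\inf$ and $\sup$. Then I would note that under \cref{assumptionrb} the minimax value on the right is finite; more importantly, for each fixed $\delta\in \FRRE$ the hyperreal $\sRisk(\Pi,\sdelta)$ is uniformly bounded (by transfer of the bound on $\Risk(\argdot,\delta)$) across all internal priors $\Pi$, so the extended standard part $\EST$ coincides with the ordinary standard part $\ST$ on every term that will appear.

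Next I would apply \cref{nsminimax} with $\cC = \FRRE$ to produce an $S$-least favorable internal prior $\Pi_0 \in \NSE{\PM{\Theta}}$ that realizes the nonstandard maximin value, giving
\[
\inf_{\delta\in \FRRE} \ST\bigl(\sRisk(\Pi_0,\sdelta)\bigr) = \inf_{\delta\in \FRRE}\sup_{\theta\in \Theta}\Risk(\theta,\delta).
\]
I would then set $\pi_0 := \ipd{\Pi_0}$, the internal push-down of $\Pi_0$; by \cref{pushdownlemma}, $\pi_0$ is a finitely additive probability measure on $\Theta$, so $\pi_0\in \FM{\Theta}$. The key step is \cref{pushdowneq}: since \cref{assumptionrb} holds, for every $\delta\in \FRRE$ we have $\Risk(\pi_0,\delta) \approx \sRisk(\Pi_0,\sdelta)$, and because both are finite reals/hyperreals this upgrades to $\Risk(\pi_0,\delta) = \ST(\sRisk(\Pi_0,\sdelta))$. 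Taking $\inf_{\delta\in \FRRE}$ on both sides yields
\[
\inf_{\delta\in \FRRE}\Risk(\pi_0,\delta) = \inf_{\delta\in \FRRE}\ST\bigl(\sRisk(\Pi_0,\sdelta)\bigr) = \inf_{\delta\in \FRRE}\sup_{\theta\in \Theta}\Risk(\theta,\delta).
\]
Combined with the trivial inequality above, this forces equality throughout, exhibits $\pi_0$ as a least favorable finitely additive prior, and proves the finitely additive minimax equality.

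Since the paper advertises the result as a direct consequence of \cref{nsminimax,pushdowneq}, there is no real obstacle; the only delicate point is justifying the substitution of $\ST$ for $\EST$ in the application of \cref{nsminimax}, and this is exactly what \cref{assumptionrb} buys us. Without boundedness, $\sRisk(\Pi_0,\sdelta)$ could be unlimited and the push-down identity in \cref{pushdowneq} would no longer translate into a clean equality of ordinary standard reals, so \cref{assumptionrb} is genuinely essential to the argument rather than a convenience.
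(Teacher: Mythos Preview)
Your proposal is correct and follows essentially the same approach as the paper: apply \cref{nsminimax} to obtain an $S$-least favorable internal prior $\Pi_0$, push it down via \cref{pushdownlemma} to a finitely additive $\pi_0$, use \cref{pushdowneq} to identify $\Risk(\pi_0,\delta)$ with $\ST(\sRisk(\Pi_0,\sdelta))$, and sandwich with the trivial inequality. Your explicit remark that \cref{assumptionrb} forces $\EST=\ST$ on all relevant terms is a welcome clarification that the paper leaves implicit.
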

\begin{proof}
By \cref{assumptionrb}, we know that $\inf_{\delta\in \FRRE}\sup_{\theta\in \Theta}\Risk(\theta,\delta)<\infty$. By \cref{nsminimax}, there exists an internal least favorable prior $\Pi_0$ such that
\[
\inf_{\delta\in \FRRE}\EST(\sRisk(\Pi_0,\sdelta))=\sup_{\Pi\in \NSE{\ProbMeasures{\TTheta}}}\inf_{\delta\in \FRRE}\EST(\sRisk(\Pi,\sdelta))=\inf_{\delta\in \FRRE}\sup_{\theta\in \Theta}\Risk(\theta,\delta).
\]
Let $\pi_0$ be the internal push-down of $\Pi_0$ (that is, $\pi_0=\ipd{(\Pi_0)}$). By \cref{pushdowneq}, we know that $\Risk(\pi_0,\delta)\approx \sRisk(\Pi_0,\sdelta)$ for all $\delta\in \FRRE$. Thus, we have $\inf_{\delta\in \FRRE}{\Risk(\pi_0,\delta)}=\inf_{\delta\in \FRRE}\sup_{\theta\in \Theta}\Risk(\theta,\delta)$. 

Note that we always have 
\[
\inf_{\delta\in \FRRE}{\Risk(\pi_0,\delta)}\leq \sup_{\pi\in \FM{\Theta}}\inf_{\delta\in \FRRE}\Risk(\pi,\delta))\leq \inf_{\delta\in \FRRE}\sup_{\theta\in \Theta}\Risk(\theta,\delta).
\]
Thus, we know that $\sup_{\pi\in \FM{\Theta}}\inf_{\delta\in \FRRE}\Risk(\pi,\delta))=\inf_{\delta\in \FRRE}\sup_{\theta\in \Theta}\Risk(\theta,\delta)$ and $\pi_0$ is a least favorable finitely additive prior. 
\end{proof}

The following result establishes the connection between minimax principle and finitely additive Bayes optimality. It is a direct consequence of \cref{intpdminimax}.
\begin{theorem}
Suppose \cref{assumptionrb} holds. 
Then there exists a least favorable finitely additive prior $\pi_0$. 
Moreover, if $\delta_0$ is a minimax decision procedure, then $\delta_0$ is Bayes with respect to $\pi_0$. 
\end{theorem}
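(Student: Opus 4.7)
The plan is to leverage \cref{intpdminimax} directly, since it already delivers both the minimax equality and the existence of a least favorable finitely additive prior; only the Bayes optimality of $\delta_0$ with respect to $\pi_0$ remains.

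First, I would invoke \cref{intpdminimax} to obtain a least favorable finitely additive prior $\pi_0 \in \FM{\Theta}$ satisfying
\[
\inf_{\delta\in \FRRE}\Risk(\pi_0,\delta) = \sup_{\pi\in \FM{\Theta}}\inf_{\delta\in \FRRE}\Risk(\pi,\delta) = \inf_{\delta\in \FRRE}\sup_{\theta\in \Theta}\Risk(\theta,\delta).
\]
This already establishes the first assertion of the theorem.

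For the second assertion, suppose $\delta_0$ is a minimax decision procedure, meaning $\sup_{\theta\in \Theta}\Risk(\theta,\delta_0) = \inf_{\delta\in \FRRE}\sup_{\theta\in \Theta}\Risk(\theta,\delta)$. On one hand, integrating against $\pi_0$ cannot exceed the supremum over $\Theta$, so
\[
\Risk(\pi_0,\delta_0) \leq \sup_{\theta\in \Theta}\Risk(\theta,\delta_0) = \inf_{\delta\in \FRRE}\Risk(\pi_0,\delta),
\]
where the last equality is the least-favorability of $\pi_0$ combined with $\delta_0$ being minimax. On the other hand, $\Risk(\pi_0,\delta_0) \geq \inf_{\delta\in \FRRE}\Risk(\pi_0,\delta)$ trivially, so the two sides are equal, i.e., $\delta_0$ is Bayes with respect to $\pi_0$.

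There is no substantive obstacle here since the theorem is essentially a corollary-level repackaging of \cref{intpdminimax} together with the standard sandwich argument linking minimax procedures to Bayes procedures under a least favorable prior. The only point to be mindful of is that the Bayes risk $\Risk(\pi_0,\delta_0)$ is well-defined and finite, which is guaranteed by \cref{assumptionrb} ensuring $\Risk(\argdot,\delta_0)$ is bounded, hence $\pi_0$-integrable as a finitely additive integral.
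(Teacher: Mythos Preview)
Your proposal is correct and matches the paper's approach exactly: the paper states this theorem without proof, declaring it ``a direct consequence of \cref{intpdminimax},'' and your sandwich argument $\inf_{\delta}\Risk(\pi_0,\delta)\le\Risk(\pi_0,\delta_0)\le\sup_{\theta}\Risk(\theta,\delta_0)=\inf_{\delta}\Risk(\pi_0,\delta)$ is precisely the standard justification for that claim. Your remark that \cref{assumptionrb} guarantees $\Risk(\pi_0,\delta_0)$ is a well-defined finite quantity is the only detail worth noting, and you have handled it.
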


\begin{example}\label{canexample2}
Let us consider the statistical decision problem defined in \cref{canexample}.
We now show that there exists a least favorable finitely additive prior $\pi_0$ such that
\[
\inf_{\delta\in \FRRE}\sup_{\theta\in \Theta}\Risk(\theta,\delta)=\sup_{\pi\in {\FM{\Theta}}}\inf_{\delta\in \FRRE}{\Risk(\pi,\delta)}=\inf_{\delta\in \FRRE}\Risk(\pi_0,\delta). 
\]
Recall from \cref{canexample}, we know that $\inf_{\delta\in \FRRE}\sup_{\theta\in \Theta}\Risk(\theta,\delta)=1$. 
Thus, it is sufficient to show that $\sup_{\pi\in {\FM{\Theta}}}\inf_{\delta\in \FRRE}{\Risk(\pi,\delta)}=\inf_{\delta\in \FRRE}\Risk(\pi_0,\delta)=1$.

Let $\Pi$ be an internal probability measure concentrating on $\frac{1}{K}$ for some infinite $K\in \NSE{\Nats}$ and let $\pi_0=\ipd{\Pi}$.
By \cref{pushdownlemma}, $\pi_0$ defines a finitely additive probability measure on $(\Theta,\BorelSets {\Theta})$. 
Pick an arbitrary $\delta\in \FRRE$ and a positive $\epsilon\in \Reals$. 
There exists a finite set $B\subset \cA$ such that such that $\delta(0,B)>1-\epsilon$. 
Then we have
\[
\sRisk(\Pi,\NSE{\delta})&=\int \sRisk(\theta,\NSE{\delta})\dee \Pi\\
&=\NSE{\Risk}\left(\frac{1}{K},\NSE{\delta}\right)=\int_{\NSE{\cA}}\NSE{\Loss}\left(\frac{1}{K},a\right)\NSE{\delta}(0,\dee a)\\
&=\int_{B}\NSE{\Loss}\left(\frac{1}{K},a\right)\NSE{\delta}(0,\dee a)+\int_{\NSE{\cA}\setminus B}\NSE{\Loss}\left(\frac{1}{K},a\right)\NSE{\delta}(0,\dee a)\\
&>1-\epsilon-\epsilon=1-2\epsilon.
\]
As the choice of $\epsilon$ is arbitrary, we have $\sRisk(\Pi,\NSE{\delta})=1$. 
By \cref{pushdowneq}, we have $\Risk(\pi_0,\delta)=\sRisk(\Pi,\NSE{\delta})=1$. Hence we have 
\[
\inf_{\delta\in \FRRE}\sup_{\theta\in \Theta}\Risk(\theta,\delta)=\inf_{\delta\in \FRRE}\Risk(\pi_0,\delta)=1, 
\]
completing the proof. 
\end{example}

\subsection{Least Favorable Sequence of Priors}

An alternative way to deal with the failure of \cref{minimaxthm} is to approximate the maximin value with a sequence of priors. Sequences of priors can be seen as approximations of generalized priors and finitely additive priors. For example, the Lebesgue measure on $\Reals$ can be approximated by a sequence of uniform distributions on $[-n,n]$, with $n \to \infty$. In the latter case, as pointed out in \citet{faprob}, under moderate conditions, every finitely additive prior can be viewed as a weak limit of a sequence of countably additive priors. On the other hand, by taking the external push-down of the nonstandard extension of a finitely additive probability measure, one obtains a countably additive probability measure on the compactification. By using this strong connection between finitely additive priors and sequences of countable additive priors, we obtain a standard minimax theorem for statistical decision problems with non-compact parameter spaces. 

\begin{definition}[least favorable sequence of priors]\label{deflfseq}
A sequence of prior distributions $\{\pi_n\}$ is least favorable if for every prior distribution $\pi$ we have 
\[
\Risk_{\pi}\leq \Risk=\lim_{n\to \infty}\Risk_{\pi_n}
\]
where $\Risk_{\pi}=\inf\{\Risk(\pi,\delta): \delta\in \FRRE\}$. 
\end{definition}

In order to formally study the relation between finitely additive priors and sequences of countably additive priors, we introduce the Wasserstein distance on the space of finitely additive probability measures. 

\begin{definition}\label{defwass}
Let $X$ be a metric space endowed with Borel $\sigma$-algebra $\BorelSets X$ and let $\mu,\nu$ be two finitely additive probability measures on $(X,\BorelSets X)$. Let $k\in \Nats$. Then the $k$-Wasserstein distance between $\mu$ and $\nu$ is 
\[
W_{k}(\mu,\nu)=\sup\{|\int f \dee \mu-\int f\dee \nu|: f\in \Lip{k}{X}\}
\]
where $\Lip{k}{X}$ denotes the collection of all $k$-Lipschitz functions from $X$ to $\Reals$. 
\end{definition}

We quote the following result in nonstandard analysis. 

\begin{lemma}[{\citep[][Thm.~4.16]{faprob}}]\label{pdcloseipd}
Suppose $X$ is a bounded $\sigma$-compact metric space and $\nu$ is an internal probability measure on $(\NSE{X},\NSE{\BorelSets X})$.
Let $\pd{\nu}$ and $\ipd{\nu}$ denote the external push-down and internal push-down of $\nu$, respectively.
Suppose $\pd{\nu}$ is a countably additive probability measure on $(X,\BorelSets X)$.
Then $W_{k}(\pd{\nu},\ipd{\nu})=0$ for all $k\in \Nats$. 
\end{lemma}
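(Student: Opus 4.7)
The plan is to fix arbitrary $k \in \Nats$ and $f\in \Lip{k}{X}$, and show that $\int f \, \dee \pd{\nu} = \int f \, \dee \ipd{\nu}$. Since $f$ was arbitrary, this will give $W_k(\pd{\nu}, \ipd{\nu}) = 0$ directly from \cref{defwass}. Note that because $X$ is bounded and $f$ is $k$-Lipschitz, $f$ is bounded, say $|f| \leq M$; by the transfer principle $|\NSE{f}| \leq M$ on $\NSE{X}$, so $\NSE{f}$ is automatically $S$-integrable with respect to $\nu$ (since the tails in \cref{sintegral} vanish once $n \geq M$).

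The first step is to relate each of the two standard integrals to a nonstandard integral against $\nu$ or its Loeb extension. Applying \cref{pdintcts} to the bounded (Borel) function $f$ yields $\int f \, \dee \pd{\nu} = \int (f \circ \ST) \, \dee \Loeb{\nu}$. Applying \cref{intpushdowneq} (whose proof transfers verbatim to $(\NSE X,\NSE{\BorelSets X})$ since the partitioning sets $f^{-1}(K_i)$ are Borel) yields $\int \NSE{f} \, \dee \nu \approx \int f \, \dee \ipd{\nu}$.

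The second, and crucial, step is to show that $\int \NSE f \, \dee \nu \approx \int (f\circ \ST) \, \dee \Loeb{\nu}$. Because $f$ is continuous (being $k$-Lipschitz), the ``$\approx$'' version of \cref{nsupper} applied to both $f$ and $-f$ gives $\NSE{f}(y) \approx f(\ST(y))$ for every near-standard $y \in \NSE X$. The hypothesis that $\pd{\nu}$ is a countably additive \emph{probability} measure translates, by \cref{defpushdown}, to $\Loeb{\nu}(\ST^{-1}(X)) = 1$; that is, the set of near-standard points in $\NSE X$ carries full Loeb mass. Consequently $\ST(\NSE{f}(y)) = f(\ST(y))$ holds $\Loeb{\nu}$-almost surely, and \cref{intheory} applied to the $S$-integrable function $\NSE{f}$ yields $\int \NSE f \, \dee \nu \approx \int \ST(\NSE f) \, \dee \Loeb{\nu} = \int (f\circ \ST) \, \dee \Loeb{\nu}$.

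Chaining the three relations shows that $\int f \, \dee \pd{\nu}$ and $\int f \, \dee \ipd{\nu}$ are standard real numbers differing by at most an infinitesimal, hence are equal. The main obstacle is precisely the second step: without the assumption that $\pd{\nu}$ is a (countably additive) probability measure, $\nu$ could put non-infinitesimal mass on the complement of $\NS{\NSE X}$, where the identification $\ST(\NSE f(y)) = f(\ST y)$ fails and mass leaks away when passing to $\pd{\nu}$. The $\sigma$-compactness of $X$ is what makes the hypothesis ``$\pd{\nu}$ is a probability measure'' a meaningful (non-automatic) constraint in the non-compact setting.
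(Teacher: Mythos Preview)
The paper does not give its own proof of this lemma; it is simply quoted from the external reference \citep[Thm.~4.16]{faprob}. So there is nothing in the present paper to compare your argument against.

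On its own merits, your argument is essentially correct and follows the natural line: compute both standard integrals as (approximately) the internal integral $\int \NSE f\,\dee\nu$, using continuity of $f$ together with the hypothesis $\Loeb{\nu}(\NS{\NSE X})=1$ to identify $\ST(\NSE f)$ with $f\circ\ST$ almost everywhere. One point deserves a small repair: you invoke \cref{pdintcts} to obtain $\int f\,\dee\pd{\nu}=\int (f\circ\ST)\,\dee\Loeb{\nu}$, but that result is stated only for \emph{compact} Hausdorff $Y$, whereas here $X$ is merely bounded $\sigma$-compact. The identity you want is, however, nothing more than the change-of-variables formula for the pushforward measure $\pd{\nu}=\Loeb{\nu}\circ\ST^{-1}$, and this holds directly once you know (i) $\pd{\nu}$ is a probability measure on $\BorelSets X$ (assumed), and (ii) $f\circ\ST$ is defined $\Loeb{\nu}$-a.e.\ and Loeb measurable (which follows from $\Loeb{\nu}(\NS{\NSE X})=1$ and the definition of $\pd{\nu}$). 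So replace the appeal to \cref{pdintcts} with a direct appeal to change of variables and your proof goes through cleanly.
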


Thus, even if the Wasserstein distance defines a metric on the set of probability measures, it is merely a pseudo-metric on the set of finitely additive probability measures. 
A (not necessarily compact) metric space $X$ can often be embedded into a compact metric space $\comp{X}$, called the compactification of $X$. However, the metric on $X$ may be different when viewed as a subset of $\comp{X}$. The following theorem gives a sufficient and necessary condition for the existence of a compactification $\comp{X}$ into which $X$ embeds isometrically. 

\begin{lemma}\label{isocomp}
A metric space $X$ is isometrically embeddable into a compact metric space $\comp{X}$ if and only if it is totally bounded. 
\end{lemma}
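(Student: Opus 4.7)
The plan is to prove both directions using standard facts from metric topology, with the completion of $X$ providing the desired compactification in the nontrivial direction.

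For the easy direction ($\Rightarrow$), I would argue that every compact metric space $\comp{X}$ is totally bounded, since for any $\epsilon > 0$ the open cover by $\epsilon$-balls has a finite subcover. Total boundedness is clearly inherited by subspaces equipped with the restricted metric: if $\{B(y_i,\epsilon)\}_{i=1}^n$ covers $\comp{X}$, then for each $i$ such that $B(y_i,\epsilon)\cap X\neq\emptyset$ we may pick $x_i\in B(y_i,\epsilon)\cap X$ and observe that the balls $B(x_i,2\epsilon)$ cover $X$. Since an isometric embedding identifies $X$ with such a subspace, $X$ is totally bounded.

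For the nontrivial direction ($\Leftarrow$), I would let $\comp{X}$ denote the metric completion of $X$, with the canonical isometric embedding $\iota: X \hookrightarrow \comp{X}$. It suffices to show that $\comp{X}$ is compact. The strategy is to verify that $\comp{X}$ is complete and totally bounded, since for metric spaces these two conditions together imply compactness (equivalently, sequential compactness via a diagonal argument extracting a Cauchy subsequence from any sequence, using total boundedness, then invoking completeness). Completeness of $\comp{X}$ is automatic by construction. Total boundedness of $\comp{X}$ follows from total boundedness of $X$: given $\epsilon>0$, pick a finite $\frac{\epsilon}{2}$-net $\{x_1,\dotsc,x_n\}\subseteq X$; then for any $\bar y\in\comp{X}$ there is $x\in X$ with $d(x,\bar y)<\frac{\epsilon}{2}$, and some $x_i$ with $d(x,x_i)<\frac{\epsilon}{2}$, so $d(x_i,\bar y)<\epsilon$, showing $\{x_1,\dotsc,x_n\}$ is an $\epsilon$-net for $\comp{X}$.

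The main subtlety—if one wishes to avoid simply citing the completeness-plus-total-boundedness theorem—is the compactness extraction argument: given a sequence $(y_k)$ in $\comp{X}$, one uses the nested $\frac{1}{m}$-nets to find, for each $m$, an infinite subsequence lying in a single ball of radius $\frac{1}{m}$, and a standard diagonalization produces a Cauchy subsequence, which converges by completeness. This is routine and I would either quote it as a well-known theorem in metric topology or sketch it briefly. No other obstacles arise, and the result follows.
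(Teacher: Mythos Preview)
Your proof is correct and follows essentially the same approach as the paper: for the forward direction you use that total boundedness passes to subspaces of a compact metric space, and for the converse you take the metric completion and invoke the fact that complete plus totally bounded implies compact. The only difference is that you spell out the routine details (the $\epsilon$-net arguments and the diagonal extraction) that the paper leaves implicit.
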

\begin{proof}
Suppose $X$ is totally bounded. 
Then the completion of $X$ is a complete and totally bounded metric space hence is compact. 
Conversely, suppose $X$ is isometrically embedded into the compact metric space $\comp{X}$.
As $\comp{X}$ is totally bounded, so is $X$. 
\end{proof} 

Moreover, as $\comp{X}$ can be taken to be the completion of $X$, every real-valued bounded uniformly continuous function on $X$ admits an unique uniformly continuous extension to $\comp{X}$. Bounded uniformly continuous functions play an essential role in weak convergence, as we shall see in the following theorem. 

\begin{theorem}[{\citep[][Thm.~5.8]{faprob}}]\label{wkconverge}
Suppose $X$ is a separable metric space endowed with Borel $\sigma$-algebra $\BorelSets X$. 
Let $P$ be a finitely additive probability measure on $(X,\BorelSets X)$. 
There exists a sequence $\{P_n\}_{n\in \Nats}$ of finitely supported probability measures on $(X,\BorelSets X)$ such that
\[
\lim_{n\to \infty}\int f \dee P_n=\int f \dee P
\]
for every real-valued bounded uniformly continuous function $f$ if and only if $X$ is totally bounded. 
\end{theorem}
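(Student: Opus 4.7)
I would prove the biconditional in two directions, treating $X$ via its compactification in the sufficiency direction and via an ultrafilter construction on an $\epsilon$-separated sequence in the necessity direction.

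For the sufficiency ($X$ totally bounded $\Rightarrow$ for every $P$ an approximating sequence exists), my first step would be to apply \cref{isocomp} to embed $X$ isometrically into its compact completion $\comp X$; every bounded uniformly continuous $f\colon X\to\Reals$ then extends uniquely to a continuous $\hat f\colon\comp X\to\Reals$. Next I would transfer $P$ to a countably additive Borel probability $\hat P$ on $\comp X$ satisfying $\int \hat f\,\dee \hat P=\int f\,\dee P$ for every bounded uniformly continuous $f$, by taking the external push-down $\hat P=\pd{\NSE P}$ via \cref{pushdown}. The integral identity is verified by combining \cref{pdintcts}, which gives $\int\hat f\,\dee\hat P=\int \hat f(\ST(s))\,\dee\Loeb{\NSE P}(s)$, with the fact that for $s\in\NSE X\subseteq\NSE{\comp X}$ continuity of $\hat f$ and nearstandardness in $\comp X$ give $\NSE f(s)=\NSE{\hat f}(s)\approx \hat f(\ST(s))$, and then applying $S$-integrability of bounded internal functions (\cref{intheory}) plus transfer. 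Finally, fix a countable dense $D\subseteq X$ (which exists by separability and is also dense in $\comp X$); by the classical fact that on a compact metric space every Borel probability measure is a weak limit of finitely supported probability measures on any countable dense subset, choose finitely supported $P_n$ on $D\subseteq X$ with $\int \hat f\,\dee P_n\to\int \hat f\,\dee \hat P$ for every $\hat f\in C(\comp X)$, and restrict back to obtain $\int f\,\dee P_n\to\int f\,\dee P$ for every bounded uniformly continuous $f$ on $X$.

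For the necessity, I would suppose $X$ is not totally bounded and fix $\epsilon>0$ with an infinite $\epsilon$-separated sequence $\{x_k\}_{k\in\Nats}\subseteq X$. Fix a free ultrafilter $\mathcal{U}$ on $\Nats$ and define $P(B)=1$ if $\{k\colon x_k\in B\}\in\mathcal{U}$ and $0$ otherwise, giving a two-valued finitely additive probability on $X$. Construct the disjointly supported Lipschitz bumps $g_k(x)=\max(0,\,1-3\,d(x,x_k)/\epsilon)$, so that for every $h\in\ell^\infty(\Nats)$ the function $f_h:=\sum_k h(k)g_k$ is bounded uniformly continuous (Lipschitz with constant $3/\epsilon$, well-defined by disjointness of supports) and $\int f_h\,\dee P=\mathcal{U}(h):=\lim_{\mathcal{U}} h$. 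Supposing toward contradiction that $\{P_n\}$ is a sequence of finitely supported probabilities on $X$ with $\int f\,\dee P_n\to\int f\,\dee P$ for every bounded uniformly continuous $f$, set $\beta_n(k):=\int g_k\,\dee P_n$; then $\beta_n\in\ell^1(\Nats)$ with $\|\beta_n\|_1\leq 1$ (since $\sum_k g_k\leq 1$ pointwise), and $\langle\beta_n,h\rangle=\int f_h\,\dee P_n\to\mathcal{U}(h)$ for every $h\in\ell^\infty(\Nats)$. Thus $\{\beta_n\}$ is weakly Cauchy in $\ell^1$; by the classical weak sequential completeness of $\ell^1$, it converges weakly to some $\beta\in\ell^1$ with $\mathcal{U}(h)=\langle\beta,h\rangle$ for every $h\in\ell^\infty(\Nats)$. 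Testing against $h=\chi_{\{k\}}$ forces $\beta(k)=0$ for each $k$, hence $\beta=0$, contradicting $\mathcal{U}(1)=1$.

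The main delicate point in the sufficiency direction will be verifying that the external push-down $\hat P$ correctly reproduces the $P$-integrals on bounded uniformly continuous functions, which synthesizes continuity of the extensions $\hat f$ with the nonstandard $S$-integrability machinery (\cref{pdintcts,intheory}) applied on the compactum $\comp X$. For the necessity, the key subtlety will be recognizing that the bump-weights $\beta_n$ naturally sit in $\ell^1$ (and not merely in the larger dual $(\ell^\infty)^*$), which is what permits invoking weak sequential completeness of $\ell^1$ rather than a weak-$*$ compactness argument in $(\ell^\infty)^*$; the latter would produce a limit in $(\ell^\infty)^*$, not in $\ell^1$, and would fail to deliver the contradiction.
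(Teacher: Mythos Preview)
The paper does not supply a proof of this result; it is quoted as \citep[Thm.~5.8]{faprob} and invoked as a black box in the proof of \cref{seqminimax}. There is therefore no in-paper argument to compare your proposal against, so I assess correctness directly.

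Your argument is sound in both directions. One small technical point in the sufficiency half: \cref{pushdown} is formulated for internal measures on $(\NSE Y,\NSE{\BorelSets Y})$ with $Y$ compact, but $\NSE P$ lives on $\NSE X$, not on $\NSE{\comp X}$. You should first regard $P$ as a finitely additive measure on $(\comp X,\BorelSets{\comp X})$ via $\tilde P(B)=P(B\cap X)$ (well-defined since $B\cap X\in\BorelSets X$ for every $B\in\BorelSets{\comp X}$), and then push down $\NSE{\tilde P}$; with that adjustment your chain through \cref{pdintcts} and \cref{intheory} goes through verbatim. In the necessity half, the Lipschitz constant of $f_h$ is not literally $3/\epsilon$ (a factor of $\|h\|_\infty$ enters, and the global bound is closer to $6\|h\|_\infty/\epsilon$), but only uniform continuity is needed, so this is harmless. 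The appeal to weak sequential completeness of $\ell^1$ is a clean way to reach the contradiction.
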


\cref{wkconverge} is sharp in the sense that it fails for merely bounded continuous functions. 
We are now at the place to present the following minimax theorem. 

\begin{theorem}\label{seqminimax}
Suppose $\inf_{\delta\in \FRRE}\sup_{\theta\in \Theta}\Risk(\theta,\delta)<\infty$. 
Suppose, in addition, the parameter space $\Theta$ is a totally bounded separable metric space, and there exists $k\in \Nats$ such that $\Risk(\cdot,\delta)\in \Lip{k}{\Theta}$ for every $\delta\in \FRRE$.  
Then we have
\[
\inf_{\delta\in \FRRE}\sup_{\theta\in \Theta}\Risk(\theta,\delta)=\sup_{\pi\in \PM{\Theta}}\inf_{\delta\in \FRRE}\Risk(\pi,\delta)=\sup_{\pi\in \FM{\Theta}}\inf_{\delta\in \FRRE}\Risk(\pi,\delta).
\]
Moreover, there exists a sequence $\{\pi_n\}_{n\in \Nats}$ of finitely supported priors such that 
\[
\lim_{n\to \infty}\Risk_{\pi_n}=\inf_{\delta\in \FRRE}\sup_{\theta\in \Theta}\Risk(\theta,\delta).
\]
\end{theorem}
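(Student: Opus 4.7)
The plan is to combine the finitely additive minimax theorem \cref{intpdminimax} with the finite-support approximation result \cref{wkconverge}, and then to upgrade the resulting weak convergence of priors to Wasserstein convergence via an Arzela--Ascoli argument. The uniformity of the Lipschitz constant across $\delta\in\FRRE$ will then translate uniform closeness of priors in $W_k$ into uniform closeness of the corresponding Bayes risks, from which all three equalities and the convergence statement follow at once.

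First I would verify that \cref{assumptionrb} is applicable. Fix any $\delta\in\FRRE$: since $\Risk(\argdot,\delta)\in\Lip{k}{\Theta}$, for arbitrary $\theta_0,\theta\in\Theta$ we have $|\Risk(\theta,\delta)-\Risk(\theta_0,\delta)|\leq k\,d(\theta,\theta_0)$, and total boundedness of $\Theta$ yields a finite diameter $D$, so $\Risk(\argdot,\delta)$ is bounded (with a bound that may depend on $\delta$). Invoking \cref{intpdminimax} then gives
\[
V:=\inf_{\delta\in\FRRE}\sup_{\theta\in\Theta}\Risk(\theta,\delta)=\sup_{\pi\in\FM{\Theta}}\inf_{\delta\in\FRRE}\Risk(\pi,\delta)
\]
together with a least favorable finitely additive prior $\pi_0\in\FM{\Theta}$.

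Since $\Theta$ is totally bounded and separable, \cref{wkconverge} applied to $\pi_0$ produces a sequence $\{\pi_n\}_{n\in\Nats}$ of finitely supported countably additive priors on $\Theta$ with $\int f\dee\pi_n\to\int f\dee\pi_0$ for every bounded uniformly continuous $f$; in particular this holds for every $f\in\Lip{k}{\Theta}$, since such $f$ is automatically bounded (by finite diameter of $\Theta$) and uniformly continuous. The key step is to upgrade this pointwise-in-$f$ convergence to uniform convergence over $\Lip{k}{\Theta}$, i.e.\ to show $W_k(\pi_n,\pi_0)\to 0$. Fix $\theta_0\in\Theta$ and let $\mathcal F=\{f\in\Lip{k}{\Theta}:f(\theta_0)=0\}$; this family is uniformly bounded by $kD$ and uniformly equicontinuous. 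By \cref{isocomp}, each $f\in\mathcal F$ extends uniquely and $k$-Lipschitzly to the compact completion $\comp{\Theta}$, so by Arzela--Ascoli $\mathcal F$ is precompact in sup norm on $\Theta$. Covering $\mathcal F$ by finitely many sup-norm $\epsilon$-balls centred at $f_1,\dotsc,f_N\in\mathcal F$ and using the pointwise convergence at the $f_i$'s, a triangle inequality gives $\sup_{f\in\mathcal F}\big|\int f\dee\pi_n-\int f\dee\pi_0\big|\to 0$. Since $\int f\dee\pi_n-\int f\dee\pi_0$ is unchanged under $f\mapsto f+c$, this supremum equals $W_k(\pi_n,\pi_0)$, giving the desired upgrade.

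From here the conclusion is immediate. For every $\delta\in\FRRE$, $\Risk(\argdot,\delta)\in\Lip{k}{\Theta}$, hence $|\Risk(\pi_n,\delta)-\Risk(\pi_0,\delta)|\leq W_k(\pi_n,\pi_0)\to 0$ uniformly in $\delta$, so $\inf_{\delta\in\FRRE}\Risk(\pi_n,\delta)\to\inf_{\delta\in\FRRE}\Risk(\pi_0,\delta)=V$; this is the claimed convergence, and together with the trivial bound $\sup_{\pi\in\PM{\Theta}}\inf_\delta\Risk(\pi,\delta)\leq V$ it also establishes $\sup_{\pi\in\PM{\Theta}}\inf_\delta\Risk(\pi,\delta)=V$, completing the chain of equalities. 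I expect the main obstacle to be the Arzela--Ascoli upgrade from weak convergence to $W_k$ convergence: weak convergence against bounded uniformly continuous functions is strictly weaker than $W_k$ convergence in general, and only the uniformity of the Lipschitz constant across $\delta$, combined with total boundedness of $\Theta$ (giving equicontinuity and uniform boundedness of $\mathcal F$), makes the passage possible.
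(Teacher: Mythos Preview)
Your proposal is correct and follows the same overall architecture as the paper: verify \cref{assumptionrb}, invoke \cref{intpdminimax} to get a finitely additive least favorable prior $\pi_0$, invoke \cref{wkconverge} to get finitely supported approximants $\pi_n$, upgrade to $W_k(\pi_n,\pi_0)\to 0$, and then use the uniform Lipschitz constant to pass from Wasserstein closeness to uniform closeness of Bayes risks.

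The genuine difference lies in the upgrade step. The paper does not argue Arzel\`a--Ascoli directly against the finitely additive $\pi_0$; instead it passes to the compact completion $\comp{\Theta}$, forms the countably additive external push-down $P=\pd{(\NSE{\pi_0})}$ via \cref{pushdown}, invokes \cref{pdcloseipd} to obtain $W_k(P,\pi_0)=0$, shows (using \cref{intpushdowneq} and \cref{pdintcts}) that $\pi_n\to P$ weakly on $\comp{\Theta}$, and then cites the standard fact that weak convergence equals $W_k$-convergence on a compact metric space to get $W_k(\pi_n,P)\to 0$, whence $W_k(\pi_n,\pi_0)\to 0$. Your route is more elementary: you bypass the nonstandard push-down machinery and the auxiliary measure $P$ entirely, instead observing that the normalized Lipschitz class $\mathcal F$ is sup-norm precompact (via extension to $\comp{\Theta}$ and Arzel\`a--Ascoli) and running a finite $\epsilon$-net argument directly against the finitely additive integral. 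This works because the finitely additive integral is sup-norm $1$-Lipschitz on bounded functions, which is all the triangle inequality needs. The paper's route, by contrast, keeps the argument within the nonstandard framework developed earlier and reuses \cref{pdcloseipd}; yours is self-contained at this step but imports a compactness argument from classical analysis. Both yield exactly the same $W_k$ conclusion, after which the endgames coincide.
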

\begin{proof}
As $\Theta$ is totally bounded and $\Risk(\cdot,\delta)\in \Lip{k}{\Theta}$ for every $\delta\in \FRRE$,
\cref{assumptionrb} holds.
By \cref{intpdminimax}, there exists a least favorable finitely additive prior $\pi_0$ such that
\[
\inf_{\delta\in \FRRE}\sup_{\theta\in \Theta}\Risk(\theta,\delta)=\sup_{\pi\in {\FM{\Theta}}}\inf_{\delta\in \FRRE}{\Risk(\pi,\delta)}=\inf_{\delta\in \FRRE}\Risk(\pi_0,\delta).
\]
By \cref{wkconverge}, there exists a sequence $\{\pi_n\}_{n\in \Nats}$ of finitely supported priors such that
$\lim_{n\to \infty} \int f \dee \pi_n=\int f \dee \pi_0$ for every real-valued bounded uniformly continuous function $f$. 
Let $\comp{\Theta}$ be the completion of $\Theta$. By \cref{isocomp}, $\comp{\Theta}$ is a compactification of $\Theta$. 
Note that $\pi_0$ as well as $\{\pi_n\}_{n\in \Nats}$ can be naturally extend to (finitely additive) probability measures on $(\comp{\Theta},\BorelSets {\comp{\Theta}})$. Let $P=\pd{(\NSE{\pi_0})}$. By \cref{pdcloseipd}, we see that $P$ is a countably additive probability measure on $(\comp{\Theta},\BorelSets {\comp{\Theta}})$ with $W_{k}(P,\pi_0)=0$. 

\begin{claim}
The sequence $\{\pi_n\}_{n\in \Nats}$ converges to $P$ weakly. 
\end{claim}
\begin{proof}
Pick a bounded continuous function $f: \comp{\Theta}\to \Reals$. 
Then the restriction of $f$ to $\Theta$ is a real-valued bounded uniformly continuous function. 
Thus, we have
\[
\lim_{n\to \infty}\int_{\comp{\Theta}} f \dee \pi_n&=\lim_{n\to \infty} (\int_{\Theta} f \dee \pi_n+\int_{\comp{\Theta}\setminus\Theta} f \dee \pi_n)\\
&=\int_{\Theta} f \dee \pi_0+\int_{\comp{\Theta}\setminus\Theta} f \dee \pi_0=\int_{\comp{\Theta}}f \dee \pi_0.
\]
By \cref{intpushdowneq,pdintcts}, we know that $\int_{\comp{\Theta}}f \dee \pi_0=\int_{\comp{\Theta}}f \dee P$. 
Thus, we have $\lim_{n\to \infty}\int_{\comp{\Theta}} f \dee \pi_n=\int_{\comp{\Theta}}f \dee P$ hence $\pi_n$ converges to $P$ weakly. 
\end{proof}

As $\comp{\Theta}$ is compact, we have $\lim_{n\to \infty} W_k(\pi_n,P)=0$. As $\comp{X}$ is the compact metric completion of $X$, 
every $k$-Lipschitz function $f: X\to \Reals$ can be extended to a real-valued $k$-Lipschitz function on $\comp{X}$. 
Thus, we have $\lim_{n\to \infty} W_k(\pi_n,\pi_0)=0$. 
Pick a positive $\epsilon\in \Reals$.
There exists a $m\in \Nats$ such that $W_k(\pi_i,\pi_0)<\epsilon$ for every $i>m$.
In particular, we have $|\Risk(\pi_i,\delta)-\Risk(\pi_0,\delta)|<\epsilon$ for all $\delta\in \FRRE$ and all $i>m$.
Hence, we have $|\Risk_{\pi_i}-\Risk(\pi_0)|<\epsilon$ for all $i>m$, which implies that
$\lim_{n\to \infty}\Risk_{\pi_n}=\Risk_{\pi_0}$.

In conclusion, we have
\[
\inf_{\delta\in \FRRE}\sup_{\theta\in \Theta}\Risk(\theta,\delta)=
\sup_{\pi\in \PM{\Theta}}\inf_{\delta\in \FRRE}\Risk(\pi,\delta)=\sup_{\pi\in \FM{\Theta}}\inf_{\delta\in \FRRE}\Risk(\pi,\delta)
=\lim_{n\to \infty}\Risk_{\pi_n},
\]
completing the proof. 
\end{proof}

Under the assumptions of \cref{seqminimax}, there exists a sequence of least favorable priors $\{\pi_n\}_{n\in\Nats}$ such that each of them is finitely supported. In \cref{seqminimax}, the assumptions on the parameter space as well as risk functions seem quite strong. 
In the following example, we show that these assumptions are necessary.  

\begin{example}\label{canexample3}
Let us first consider the statistical decision problem defined in \cref{canexample}. 
The parameter space $\Theta$ is a totally bounded, locally compact separable metric space.
The loss function $\Loss$ is bounded continuous but not $k$-Lipschitz continuous for any $k\in \Nats$. 
By \cref{canexample2}, there exists a finitely additive prior $\pi_0$ such that $\inf_{\delta\in \FRRE}\Risk(\pi_0,\delta)=\inf_{\delta\in \FRRE}\sup_{\theta\in \Theta}\Risk(\theta,\delta)=1$. 
However, from \cref{canexample}, we have $\inf_{\delta\in \FRRE}\Risk(\pi,\delta)=-1$ for every $\pi\in \PM{\Theta}$, hence the minimax equality fails.

Let $X=\{0\}$, $\Theta=\AS=\{n: n\in \Nats\}$ and $\Model_{\theta}(\{0\})=1$ for all $\theta\in \Theta$. 
Define
\[\Loss(\theta,a) = 
  \begin{cases}
    1			&\text{ if $\theta > a+1$,}\\
    \theta-a		&\text{ if $\theta\in [a-1,a+1]$,}\\
    -1			&\text{ if $\theta<a-1$.}
  \end{cases}
\]   
The parameter space $\Theta$ is a locally compact separable metric space but not totally bounded.  
For every $\delta\in \FRRE$, the risk function $\Risk(\cdot,\delta)$ is a $1$-Lipschitz function. 
Using a similar argument as in the proof of \cref{neqclaim}, 
we can show that $\sup_{\theta\in \Theta}\Risk(\theta,\delta)=1$ for every $\delta\in \FRRE$ and $\inf_{\delta\in \FRRE}\Risk(\pi,\delta)=-1$ for all $\pi\in \PM{\Theta}$. 
Hence we have $\inf_{\delta\in \FRRE}\sup_{\theta\in \Theta}\Risk(\theta,\delta)=1$ and $\sup_{\pi\in {\PM{\Theta}}}\inf_{\delta\in \FRRE}{\Risk(\pi,\delta)}=-1$. Hence the minimax equality fails. 

Finally, we show that the finitely additive minimax equality holds and there exists a finitely additive least favorable prior. 
Let $\Pi$ be an internal prior that concentrates on $K$ for some infinite $K\in \NSE{\Nats}$. 
Using a similar argument as in \cref{canexample2},
we have $\sRisk(\Pi,\NSE{\delta})=\sRisk(K,\NSE{\delta})=1$ for every $\delta\in \FRRE$.
Let $\pi_0=\ipd{\Pi}$. By \cref{pushdowneq}, we have $\Risk(\pi_0,\delta)=\sRisk(\Pi,\NSE{\delta})=1$. 
Hence we have 
\[
\inf_{\delta\in \FRRE}\sup_{\theta\in \Theta}\Risk(\theta,\delta)=\sup_{\Pi\in \NSE{\ProbMeasures{\TTheta}}}\inf_{\delta\in \FRRE}\ST({\NSE{\Risk}(\Pi,\NSE{\delta})})=\sup_{\pi\in {\FM{\Theta}}}\inf_{\delta\in \FRRE}{\Risk(\pi,\delta)}=1.
\]

\end{example}

\printbibliography

\end{document}